\documentclass{article}

\usepackage{amssymb}
\usepackage{amsmath}
\usepackage{amsthm}
\usepackage{geometry}
\usepackage[hidelinks]{hyperref}
\usepackage{natbib}
\usepackage{authblk}
\newtheorem{Theorem}{Theorem}
\newtheorem{Definition}{Definition}
\newtheorem{Lemma}{Lemma}
\newtheorem{Proposition}{Proposition}

\newcommand{\real}{\mathbb{R}}

\newcommand{\suppf}{\sup_{f \in \Sigma(\beta,L)}}
\newcommand{\Exp}{\mathbb{E}}
\newcommand{\betahold}{\Sigma(\beta,L)}
\newcommand{\fracbeta}[1]{\frac{#1}{2\beta+d}}
\newcommand{\lonenorm}[1]{\left\Vert{#1}\right\Vert_1}
\newcommand{\euclideannorm}[1]{\left\Vert{#1}\right\Vert_2}
\newcommand{\supnorm}[1]{\left\Vert{#1}\right\Vert_{\infty}}
\newcommand{\goodpar}[1]{\left(#1\right)}
\newcommand{\goodbrak}[1]{\left[#1\right]}
\newcommand{\randomev}{\mathcal{A}_{ji}}
\newcommand{\KL}{\text{KL}}
\newcommand{\roundbtx}{\mathcal{B}_{T}(x)}
\newcommand{\inderound}{\mathbf{I}\{\mathcal{E}\}}
\newcommand{\cardinality}[2]{D_{{#1},{#2}}}
\newcommand{\derivativealpha}{f^{(\alpha)}}
\newcommand{\exponentalpha}{^{(\alpha)}}
\newcommand{\estimftalpha}{\hat{f}^{(\alpha)}_T}
\newcommand{\estimfjalpha}{\hat{f}^{(\alpha)}_j}
\newcommand{\proba}{\mathbb{P}}

\newcommand{\minimumfalpha}{f^{(\alpha)}_*}

\newcommand{\weights}{W_{T,i}^{(\alpha)}(x)}

\DeclareMathOperator*{\argmin}{arg\,min}

\begin{document}
\linespread{1.}
\title{Minimax optimality for sequential gradient-free minimization of smooth functions and their derivatives} 

\author[1,2]{Théo Paquier}
\author[1]{Alexandre B. Tsybakov}
\author[2]{François Portier}
\author[2]{Mohammadreza M. Kalan}
\affil[1]{ENSAE, CREST, 5 avenue Le Chatelier, Palaiseau, 91120, France}
\affil[2]{Univ Rennes, Ensai, CNRS, CREST—UMR 9194, F-35000 Rennes, France}
\date{}
\maketitle

\begin{abstract}
We consider the problem of noisy gradient-free minimization of the $k$-th order partial derivative of a $\beta$-H\"older function supported on a $d$-dimensional cube. We show that $T^{\fracbeta{\beta+d+k}}\log(T)^{\fracbeta{\beta-k}}$ is a non-asymptotic minimax rate of the $T$ step cumulative regret for all $\beta>0$. In the special case $k=0$, our results cover the problem of noisy gradient-free minimization of $\beta$-H\"older functions, closing the existing gap between the known upper and lower bounds. We show that a minimizer of a suitably chosen local polynomial estimator is rate-optimal. The minimax optimal upper bound is achieved under the passive design, that is, when the query points are i.i.d. Thus, there is no advantage in considering sequential designs when it is only known that $f$ is a $\beta$-H\"older function with no additional property. We propose an algorithm feasible in polynomial time that constructs a proxy of the minimizer of the local polynomial estimator. The procedure requires computing the estimator on auxiliary random points. The resulting polynomial time algorithm matches the lower bound.  
\end{abstract}

\section{Introduction}

We consider the sequential setup such that at each time instance $i\in\{1,\dots,T\}$, the learner chooses a query point $X_i\in[0,1]^d$ and observes the response 
\begin{equation}\label{model}
  Y_i = f(X_i)+\xi_i,  
\end{equation}
where $f:[0,1]^d\to \mathbb{R}$,  $\xi_i$ is a noise variable, $X_i = \Phi_i((Y_t,X_t)_{t=1}^{i-1})$, and $\Phi_i$ is a measurable function (this function can include as an extra argument a randomization variable generated by the learner). The aim of the learner is to minimize a partial derivative of order $k$ of $f$. We will refer to this derivative as $f\exponentalpha$ where $\alpha\in\mathbb{N}^d$ is a multi-index such that $\sum_{j=1}^d \alpha_j = k$. At each time instance $i$, the learner outputs an estimator $\hat{z}_i$, which is a measurable function of the observations $(X_t,Y_t)_{t=1}^i$. The accuracy of the estimation procedure is measured either by the simple regret, defined as $\Exp_f[f\exponentalpha(\hat{z}_T)]-\min_{x\in[0,1]^d}f\exponentalpha(x)$, or by the cumulative regret 
\[
R_T(f) := \sum_{i=1}^T \goodpar{\Exp_f{\goodbrak{f\exponentalpha(\hat{z}_i)}}-\min_{x\in[0,1]^d}f\exponentalpha(x)}.
\]
Here, $\Exp_f[\cdot]$ denotes the expectation with respect to the joint distribution of $((X_i,Y_i), i=1,\dots,T)$ satisfying \eqref{model}. The objective of the learner is to construct an estimator with small simple regret or a sequence of estimators with small cumulative regret.

For $k=0$, that is, when the minimization of the function $f$ itself is considered, this setting or similar ones have been extensively studied in the literature. The main stream of this line of work, known as gradient-free (or zero-order) stochastic optimization, focused on analyzing properties of the estimators under smoothness assumptions on $f$ combined with structural assumptions such as convexity, strong convexity, Polyak-\L ojasiewicz condition and others; see \cite{kiefer1952stochastic,fabian1967,polyak1990optimal,tsybakov1990passive,dippon2003accelerated, pelletier07,agarwal2010optimal,ghadimi2013stochastic,shamir2013complexity, duchi2015optimal,
bach2016highly,nesterov2017random,shamir2017optimal,locatelli2018adaptivity,akhavan2020exploiting,akhavan2024gradient,akhavan2024estimating,fokkema2024online,lattimore2026bandit}
and the references cited therein. Some of these papers treat a more pointed setting of continuum bandit optimization. In contrast to the general gradient-free optimization, the bandit setting imposes an extra requirement that the estimators $\hat{z}_i$ should coincide with $X_i$. Thus, the bandit solutions also solve the gradient-free optimization problem but not vice versa.  

In the present paper, we consider the setting where only smoothness assumptions are imposed on $f$, namely, we assume that $f$ is $\beta$-H\"older for a given $\beta>0$ (see Definition \ref{def:holder_class_chhor} below).  The prior work on this setting considered only the case $k=0$. In dimension $d = 1$, the early papers by \cite{kleinberg2004} and \cite{auer2007improved} provided bandit algorithms with cumulative regret of the order $O(T^{(\beta+1)/(2\beta+1)}\log(T)^{\beta/(2\beta+1)})$ for $0<\beta\le1$. 
Later, \cite{kleinberg2008,bubeck2011x} considered general dimension $d$ but only Lipschitz regularity classes, i.e., $\beta =1$, and required additional conditions. In \cite{bubeck2011x}, the function is assumed to be well-behaved with respect to a dissimilarity function near the optima of the mean payoff, and the distance between any two maxima is controlled by the same dissimilarity. Hence, the comparison to the class of $\beta$-H\"older functions is not straightforward. Under different assumptions, these two papers proposed bandit algorithms with cumulative regret scaling as $  T^{( 1+d) / (2 +d)} $ up to logarithmic factors.  The algorithm in \cite{kleinberg2008} requires knowledge of a topological oracle, whereas \cite{bubeck2011x} designs a directly feasible polynomial time procedure.
In a further development, \cite{Wang-Balakrishnan-Singh2019} derived upper and lower bounds for the simple regret for any $\beta>0$ and any dimension $d$ under the assumption of i.i.d. Gaussian noise $\xi_i$. Their lower bound is of the order $T^{-\beta/(2\beta+d)}$, which is smaller than the  optimal rate $T^{-\beta/(2\beta+d)}\log(T)^{\beta/(2\beta+d)}$ obtained as a special case of our results\footnote{\cite{Wang-Balakrishnan-Singh2019} deals with simple regret, so to compare with the rate for cumulative regret one needs to multiply their rates by $T$.}.  The upper bound in \cite{Wang-Balakrishnan-Singh2019} carries an additional factor $\log^{a}(T)$ with an unspecified $a>1$ that can be large. Using a different approach, 
\cite{liu2021smooth} proposed a bandit algorithm with cumulative regret of the order $T^{(\beta+d)/(2\beta+d)}\log(T)^{(6\beta + d)/(2(2\beta+d))}$ for any $\beta>0$ and any dimension $d$.   
Another paper that tackled the cumulative regret with similar assumptions is \cite{singh2021continuum}, claiming the minimax rates for the case $k = 0$ on Besov classes rather than on H\"older classes, without a valid proof however\footnote{The upper bound on the regret in \cite{singh2021continuum} is claimed for smoothness parameter and dimension in general position but its proof is based on referring to \cite{auer2007improved} that only covers the $\beta$-Hölder case with $0<\beta\le1$, and $d=1$.}. In summary, the prior work focuses only on the setting with $k=0$ and provides the minimax rates to within logarithmic factors but does not specify the exact minimax rate or an algorithm achieving it.  

The contributions of the present paper can be summarized as follows.
\begin{itemize}
    \item We prove that $T^{\fracbeta{\beta+d+k}}\log(T)^{\fracbeta{\beta-k}}$ is a non-asymptotic minimax optimal rate for the cumulative regret on the class of $\beta$-Hölder functions with any $\beta>0$, $d\ge 1$, $0\le k < \beta$. Our results apply as a special case to the problem of minimizing $f$ ($k=0$), closing the existing gap between the known upper and lower bounds for $k=0$.
    \item Our rate-optimal upper bound on the cumulative regret is proved under the passive design, that is, when $X_i$'s are i.i.d. Namely, we prove that choosing $X_1,\dots,X_T$ to be uniformly distributed on $[0,1]^d$ and then using a minimizer of a local polynomial nonparametric regression estimator is enough to achieve the minimax rate. Thus, we show that there is no advantage in considering sequential strategies of choosing $X_i$'s when it is only known that $f$ is a $\beta$-Hölder function with no additional property.
    \item We propose an estimator, which is feasible in polynomial time and achieves the minimax optimal rate. It is obtained by minimizing the local polynomial estimator on a carefully chosen auxiliary random grid.  
    \end{itemize}
   
This paper is organized as follows. Section \ref{sec:def_not} introduces the main definitions and notation. In Section \ref{sec:upper_bound}, we state the upper bound and develop more thoroughly on the use of local polynomial estimator (LPE). We prove a non-asymptotic bound for the sup-norm risk of the LPE in dimension $d$ with random uniform design. In Section \ref{sec:poly_time_estim}, we propose a polynomial time estimator that attains the optimal rate. Section \ref{sec:lower_bound} is devoted to the lower bound. 

\section{Definitions and notation}\label{sec:def_not}

To state the definitions, we will consider a function $g : [0,1]^d \rightarrow \real$, with $d \geq 1$. As multivariate polynomials and derivatives will be used, we introduce notation adapted to dimensions $d\geq 1$. We focus on convergence rates up to numerical constants and do not aim to derive sharp constants. Therefore, throughout the following $C, C',C'',C_1,C_2,\dots$ will denote positive constants that can depend on the ambient dimension and the regularity of the function but not on $T$. The value of those constants can change from line to line. Let $\left\Vert \cdot \right\Vert_2$ and $\lonenorm{\cdot}$ denote the Euclidean norm and the $l_1$ norm in $\real^d$, respectively. The Euclidean ball of radius 1 centered at $0$ in dimension $d$ is denoted by $\mathbb{B}_d$. We denote by $\mathbf{I}\{A\}$ the indicator function of the set $A$, and by $\lambda(A)$ its Lebesgue measure. We denote by $\mathcal{U}([0,1]^d)$ the uniform distribution on $[0,1]^d$. 

We use the following definition of conditionally sub-Gaussian (SG) random variable.
\begin{Definition}[Conditional $\sigma-SG$ random variable]\label{def:cond_sg}
Let $\eta$ be a real-valued random variable and let $X$ be a random variable with values in $\mathbb{R}^m$, let $\sigma>0$. A random variable $\eta$ is $\sigma-SG$ conditionally on $X$ if :
\begin{equation*}
    \forall t \in \real,\; \Exp\goodbrak{\exp(t\eta)\vert X} \leq \exp\goodpar{\frac{\sigma^2t^2}{2}}.
\end{equation*}
This implies that $\Exp[\eta\vert X]=0$. We can extend this definition to random vectors in $\real^d$. We say that a random vector $\eta\in\real^d$ is $\sigma-SG$ conditionally on $X$ if all its projections $\eta^{\top}v$ with $\euclideannorm{v}=1$ are $\sigma-SG$ random variables conditionally on $X$.
\end{Definition}

For any multi-index $s \in \mathbb{N}^d, \; s = (s_1,\dots,s_d)$, and any vector $x\in\mathbb{R}^d$ with $x=(x_1,\dots,x_d)$ set :
\[
     x^{s} = \prod_{i=1}^d x_{i}^{s_i}, \quad s! = \prod_{i=1}^d s_i !, \quad D^s = \frac{\partial ^{s_1+\dots+s_d}}{\partial x_1^{s_1}\dots\partial x_{d}^{s_d}}.
\]
For $x=(x_1,\dots,x_d)\in[0,1]^d$, we define the sup norm $\supnorm{\cdot}$ for functions and vectors as $\supnorm{g} = \sup_{x\in [0,1]^d}\vert g(x)\vert$ and $\supnorm{x}=\sup_{i\in\{1,\dots,d\}}|x_i|$. For any compact set $\Theta$, we denote its boundary by~$\partial\Theta$. We denote by $g^{(s)}$ the $s$-th derivative of $g$, i.e., for any multi-index $s$,  
\[
g^{(s)}(x) = D^{s}g(x) = \frac{\partial ^{s_1+\dots+s_d}}{\partial x_1^{s_1}\dots\partial x_{d}^{s_d}}g(x).
\]
There exist different definitions of H\"older classes. We will use the definition of \cite{chhor2024benign}, which is slightly more restrictive than the one given in \cite{stone1982optimal}. 

For any $x\in\real$ we denote by $\lfloor x\rfloor$  the largest integer strictly smaller than $x$.

\begin{Definition}[$\beta$-H\"older functions]\label{def:holder_class_chhor}
    Let $\beta>0$, $L>0$, and let $\ell=\lfloor \beta \rfloor$. We denote by $\Sigma(\beta,L)$ the set of all functions $g:[0,1]^d\to\mathbb{R}$ that are $\ell$ times continuously differentiable and satisfy the condition 
    \begin{equation}\label{eq:def_holder_chhor}
        \max_{\lonenorm{s'}\leq \ell}\sup_{x\in[0,1]^d} \vert g^{(s')}(x)\vert + \max_{\lonenorm{s}= \ell}\sup_{x,x'\in[0,1]^d} \frac{\vert g^{(s)}(x) - g^{(s)}(x')\vert}{\euclideannorm{x-x'}^{\beta-\ell}} \leq L.
    \end{equation}
\end{Definition}
We consider the class of sequential strategies of choosing the query points defined as follows. The learner chooses the query point as $X_i=\Phi_i((X_t,Y_t)_{t=1}^{i-1},\tau)$ for $i=2,\dots,T,$ where $\Phi_i$ is a measurable function, $X_1$ is any random variable, and $\tau$ is a random variable with values in a measurable space ($\mathcal{Z},\mathcal{U}$).  The variable $\tau$ is chosen by the learner and it represents a possible randomization. We denote by $\Pi_T$ the set of all such strategies of choosing query points $X_i$ for $i=1,\dots,T$. Elements of $\Pi_T$ are joint distributions of $(X_1,\dots,X_T)$. For such strategies, at each step $i$ the learner chooses a query point $X_i$, gets a feedback $Y_i$ given by \eqref{model}, where $f : [0,1]^d \rightarrow\real$ is a $\beta$-H\"older function, and outputs an estimator $\hat{z}_i = \Psi_i((Y_t,X_t)_{t=1}^{i},\tau)$. 

Let the multi-index $\alpha\in\mathbb{N}^d$ and the order $k\in\mathbb{N}$ of derivation be fixed such that $\lonenorm{\alpha}=k\leq\ell$. Let $x^*\in\argmin_{x\in[0,1]^d}f\exponentalpha(x)$ and $\minimumfalpha = f\exponentalpha(x^*)$. The minimax cumulative regret and the minimax simple regret are defined, respectively, as  
\begin{align}\label{eq:def_minimax_cumul_regret}
    R_T^* &= \inf_{\substack{(\hat{z}_i)_{i=1}^T, \\ \Pi_T}} \ \suppf  \sum_{i=1}^T\Exp_f\goodbrak{ \derivativealpha(\hat{z}_i) - \minimumfalpha},
    \\ \label{eq:def_minimax_simple_regret}
    R_T^S &= \inf_{{\hat{z}_T, \Pi_T}} \ \suppf  \Exp_f[f\exponentalpha(\hat{z}_T)- \minimumfalpha],
\end{align}
where the infimum is taken over all sequences of estimators $(\hat{z}_i)_{i=1}^T$ (respectively, all estimators $\hat{z}_T$) and all sequential strategies in~$\Pi_T$.

\section{Upper bound}\label{sec:upper_bound}

We use the following observation to control the regret. Let $\hat{S}_i$ be any estimator of the function $\derivativealpha$. Denote one of its minimizers by $\hat{x}_i$. Then it is straightforward to see that $\derivativealpha(\hat{x}_i)-\derivativealpha(x^*)\leq 2\Vert \hat{S}_i-\derivativealpha\Vert_{\infty}$. Applying this inequality we get that, for any estimator $\hat{S}_i$ with minimizer $\hat{x}_i$, the following holds:
\begin{equation}\label{ineq:general_sup_norm_bound}
R_T(f) = \sum_{i=1}^T \Exp_f\goodbrak{f\exponentalpha(\hat{x}_i)-f\exponentalpha_*} \leq 2\sum_{i=1}^T \Exp_f{\supnorm{\hat{S}_i - f\exponentalpha}}.
\end{equation}
Thus, a non-asymptotic bound on the right hand side summands allows one to control the cumulative regret for any $T$. 
We apply this argument by choosing $\hat{S}_i$'s as local polynomial estimators constructed with passive design, that is, when the query points $X_1,\dots,X_T$ are taken as i.i.d. random vectors.

\subsection{Upper bound in sup-norm error of the local polynomial estimator}\label{sec:proof_upp_lpe}

The local polynomial estimator (LPE) approximates a function's Taylor expansion. This is suitable to estimate derivatives. We consider the integer $\ell\ge 0$ that defines the order of the LPE. Dealing with $\beta$-H\"older functions, the natural choice is $\ell= \lfloor \beta\rfloor$. Let $\cardinality{\ell}{d} = \binom{d+\ell}{d}$ be the cardinality of the set of multi-indices $s=(s_1,\dots,s_d)$ such that $0\leq \lonenorm{s} \leq \ell$. We order $s^{(1)},\dots, s^{(\cardinality{\ell}{d})}$ with increasing values of their $l_1$ norm and ties can be broken arbitrarily. We define the vector valued function $U : \real^d \rightarrow \real^{\cardinality{\ell}{d}}$ as :
\begin{equation*}
\forall \; x \in \real^d, \quad  U(x) := \left(\frac{x^{s}}{s!}\right)_{\lonenorm{s} \leq \ell}.
\end{equation*}
For a multi-index $s$ such that $\lonenorm{s}\leq \ell $ and for any $x=(x_1,\dots,x_d)$ in $\real^d$, we have
\begin{equation}\label{eq:def_deriv_u}
    D^{\alpha}\frac{x^s}{s!} = \frac{1}{s!}D^{\alpha}\prod_{i=1}^d x_i^{s_i} \notag 
    = \begin{cases}
        \displaystyle \frac{1}{s!}\prod_{i=1}^d\frac{s_i !}{(s_i-\alpha_i)!} x_i^{s_i - \alpha_i} = \frac{1}{(s-\alpha)!}x^{s-\alpha} \quad \text{if } \; s_i \geq \alpha_i, \\
        0 \quad \text{otherwise}.
    \end{cases}
\end{equation}
We denote by $U\exponentalpha(\cdot)$ the component-wise $\alpha$-derivative of $U(\cdot)$:
\begin{equation}\label{eq:def_u_deriv_alpha}
    \forall \; x \in \real^d, \quad U\exponentalpha(x) := \goodpar{D^{\alpha}\frac{x^{s}}{s!}}_{\lonenorm{s}\leq \ell}.
\end{equation}
For $x\in[0,1]^d$, a kernel $K:\real^d\to\real$ and a bandwidth $h>0$ let the matrix $\roundbtx$ be defined as :
\begin{equation}\label{eq:def_roundbtx}
\roundbtx := \frac{1}{Th^d}\sum_{i=1}^T U\goodpar{\frac{X_i - x}{h}}U\goodpar{\frac{X_i - x}{h}}^{\top}K\goodpar{\frac{X_i - x}{h}} \quad \in \mathbb{R}^{\cardinality{\ell}{d} \times \cardinality{\ell}{d}}.
\end{equation}
The LPE($\ell$) of $f\exponentalpha(x)$ is defined as : 
\begin{align}
    \tilde{f}\exponentalpha_T(x) & := \hat{\theta}^{\top}_T(x)U^{(\alpha)}(0)h^{-k} , \notag \\
    \text{with} \quad \hat{\theta}_T(x) &\in \arg \min_{\theta\in\mathbb{R}^{D_{\ell,{d}}}} \sum_{i=1}^T\left( Y_i - \theta^{\top} U\left(\frac{X_i - x}{h}\right)\right)^2 K\left(\frac{X_i - x}{h}\right), \label{eq:min_pb_for_lpe_def}
\end{align}
where  $U\exponentalpha(\cdot)$ is defined for the order $\ell$ in (\ref{eq:def_u_deriv_alpha}). If matrix $\roundbtx$ is invertible the LPE is uniquely defined, and it is a linear estimator since in this case it is straightforward to represent it as follows.

\begin{Lemma}\label{lem:def_LPE}
    When $\roundbtx$ is invertible, the LPE($\ell$) is equivalently defined as 
    \begin{equation}\label{def:estimfalpha_w_weights}
        \tilde{f}\exponentalpha_T(x) = \sum_{i=1}^T Y_i \weights,
    \end{equation}
    where, for $i= 1,\dots,T$,  
    \begin{equation*}
        \weights = \frac{U^{(\alpha)}(0)^{\top}}{Th^{d+k}}\roundbtx^{-1}U\goodpar{\frac{X_i - x}{h}}K\goodpar{\frac{X_i - x}{h}}.   \end{equation*}
\end{Lemma}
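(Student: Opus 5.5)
The plan is to solve the weighted least squares problem \eqref{eq:min_pb_for_lpe_def} in closed form through its normal equations, and then substitute the minimizer into the definition of $\tilde{f}\exponentalpha_T(x)$. To lighten notation, write $U_i := U\goodpar{\frac{X_i-x}{h}}$ and $K_i := K\goodpar{\frac{X_i-x}{h}}$. The objective in \eqref{eq:min_pb_for_lpe_def} is
\[
F(\theta)=\sum_{i=1}^T (Y_i-\theta^\top U_i)^2K_i .
\]
This is a quadratic function of $\theta$, with gradient
\[
\nabla F(\theta) = -2\sum_{i=1}^T (Y_i-\theta^\top U_i)U_iK_i
\]
and Hessian
\[
2\sum_{i=1}^T U_iU_i^\top K_i = 2Th^d\,\roundbtx .
\]

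The one point needing care is that the kernel $K$ is not assumed nonnegative at this stage. The first-order condition is therefore only necessary unless $\roundbtx$ is positive semidefinite. If $K\ge 0$, the Hessian is positive semidefinite, and invertibility of $\roundbtx$ makes it positive definite. In that case $F$ is strictly convex, so its unique minimizer is characterized by $\nabla F(\theta)=0$. Under the standing interpretation that the LPE is well defined, any minimizer must satisfy the normal equations. When $\roundbtx$ is invertible, these equations have exactly one solution, so $\hat\theta_T(x)$ is uniquely determined by them. It is this solution we use.

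The normal equations read
\[
Th^d\,\roundbtx\,\theta=\sum_{i=1}^T Y_iU_iK_i ,
\]
and invertibility of $\roundbtx$ gives
\[
\hat\theta_T(x)=\frac{1}{Th^d}\roundbtx^{-1}\sum_{i=1}^T Y_iU_iK_i .
\]

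Substituting this into $\tilde{f}\exponentalpha_T(x)=\hat\theta_T(x)^\top U\exponentalpha(0)h^{-k}$, we use that a scalar equals its transpose and that $\roundbtx$ is symmetric, so $\roundbtx^{-1}$ is symmetric too. This gives
\[
\tilde{f}\exponentalpha_T(x)=\sum_{i=1}^T Y_i\,\frac{U\exponentalpha(0)^\top}{Th^{d+k}}\roundbtx^{-1}U_iK_i=\sum_{i=1}^T Y_i\weights ,
\]
which is exactly \eqref{def:estimfalpha_w_weights}.
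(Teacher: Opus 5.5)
Your normal-equations derivation is correct and is exactly the standard argument the paper has in mind when it calls this representation straightforward; the paper gives no separate proof, and the same positive-definiteness reasoning reappears in its proof of Lemma~\ref{lem:reproduction_poly}. Your caveat about the sign of $K$ is harmless but unnecessary in context: Assumption 3 gives $K(u)\ge c\,\mathbf{I}\{\|u\|_2\le\Delta\}\ge 0$, so $\roundbtx$ is positive semidefinite, and invertibility alone makes the objective strictly convex.
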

 If $\roundbtx$ is not invertible the minimization problem in (\ref{eq:min_pb_for_lpe_def}) does not have a unique solution. Representation analogous to \eqref{def:estimfalpha_w_weights} can still be used in this case. It suffices to replace $\roundbtx^{-1}$ by the Moore-Penrose pseudo-inverse to get an estimator that has properties similar to the LPE with invertible matrix. Under mild conditions,  the probability that $\roundbtx$ is singular decays exponentially with the sample size, see Lemma~\ref{lem:lemma_proba_eigen} below. Thus, on an event of overwhelming probability, $\roundbtx$ is invertible and one can consider $\roundbtx^{-1}$ instead of its pseudo-inverse. 

We now present the assumptions used below to prove the upper bound on the regret under passive design.

\begin{description}
        \item[Assumption 1] $(X_i,\xi_i)_{i=1}^T$ is an independent sequence such that $X_i \sim \mathcal{U}\goodpar{[0,1]^d}$ for each $i=1,\dots, T$.
        \item[Assumption 2] For any $i\in\{1,\dots,T\}, \;\xi_i$ is $\sigma_{\xi}$-sub-Gaussian conditionally to $X_i$, with $0<\sigma_{\xi}<\infty.$
        \item[Assumption 3] The kernel $K$: $\mathbb{R}^d\rightarrow \mathbb{R}$ is Lipschitz continuous, i.e., for all $(u,v)\in \mathbb R^d\times\real^d$, $| K(u) - K(v) | \leq L_K \|u-v\|_2 $ with  $L_K>0$, and has a compact support. Moreover, there exist constants $\Delta>0, \; c>0$ and $K_{\max}<\infty$ such that $K_{\max} > K(u) \geq c \mathbf{I}\{\euclideannorm{u}\leq \Delta\}$.
    \end{description}
In the sequel, we consider the modified version of LPE($\ell$) defined for any $x$ in $[0,1]^d$ as follows:
\begin{equation}\label{def:hatLP}
    \hat{f}_T\exponentalpha(x) =
       \min(L, \max(-L,\tilde{f}\exponentalpha_T(x))).
\end{equation}
This modification introduces projection of the estimator values on the set $[-L,L]$ that only reduces the estimation error if $f\in\betahold$.  We can now state the following proposition.

\begin{Proposition}[Non-asymptotic $L_{\infty}$ risk of the LPE]\label{prop:prop_upper_bound_lpe} Let $T\ge 2$, $\beta >0$, $\alpha \in \mathbb N^d$ be such that $k= \lonenorm{\alpha} \leq \ell=\lfloor\beta\rfloor$. Let Assumptions 1 -- 3 hold. Let $\estimftalpha$ be the local polynomial estimator \eqref{def:hatLP} of order $\ell$ with bandwidth 
    \[
    h = a \goodpar{\frac{\log(T)}{T}}^{\fracbeta{1}},
    \]
    where $a>0$. Then there exists a constant $C^+$ such that 
    \[
    \sup_{f\in\betahold}\Exp_f\goodbrak{\psi_T^{-2}\supnorm{\estimftalpha - \derivativealpha}^2} \leq C^+,
    \]
    where 
    \begin{equation*}
        \psi_T = \goodpar{\frac{\log(T)}{T}}^{\fracbeta{\beta-k}}.
    \end{equation*}
\end{Proposition}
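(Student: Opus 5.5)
We will split on a good event where the design matrix is uniformly well conditioned, and off it use the crude bound $\supnorm{\estimftalpha - \derivativealpha}\le 2L$ coming from the truncation \eqref{def:hatLP}.

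\textbf{Step 1: the good event.} Let $\mathcal{E}$ be the event that the smallest eigenvalue of $\roundbtx$ is at least $\lambda_0/2$ simultaneously for all $x\in[0,1]^d$, where $\lambda_0>0$ lower bounds the smallest eigenvalue of $\Exp\roundbtx$ uniformly in $x$ and $h\le 1$. Such a $\lambda_0$ exists by Assumption 3 ($K\ge c\mathbf{I}\{\euclideannorm{u}\le\Delta\}$). The $x^{s}$ are linearly independent on any ball, and the part of the ball of radius $\Delta h$ around $x$ lying in $[0,1]^d$ contains a ball of radius proportional to $h$. We control $\proba(\mathcal{E}^c)$ by the bound on singular design matrices referred to as Lemma~\ref{lem:lemma_proba_eigen}, made uniform in $x$ as follows. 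Take a grid of $[0,1]^d$ with mesh $h^2$ (polynomially many points in $T$). At each grid point apply Bernstein/matrix Bernstein; each entry is a sum of bounded terms with variance $O(Th^d)$. Interpolate between grid points using the Lipschitz continuity of $K$ and of $U$ on the compact support. With $Th^d\gtrsim T^{2\beta/(2\beta+d)}\log(T)^{d/(2\beta+d)}$ this gives $\proba(\mathcal{E}^c)\le C\exp(-c'Th^d)+$ polynomially small terms $\le C\psi_T^2$. On $\mathcal{E}^c$ the contribution to $\Exp[\psi_T^{-2}\supnorm{\cdot}^2]$ is therefore at most $4L^2\psi_T^{-2}\proba(\mathcal{E}^c)\le C$.

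\textbf{Step 2: bias–variance decomposition on $\mathcal{E}$.} By Lemma~\ref{lem:def_LPE}, $\tilde f_T\exponentalpha(x)=\sum_i f(X_i)\weights+\sum_i\xi_i\weights$. The weights reproduce polynomials of degree $\le\ell$: $\sum_i P(X_i)\weights=P\exponentalpha(x)$. Taylor-expand $f$ at $x$ to order $\ell$; the remainder is $O(L\euclideannorm{X_i-x}^{\beta})=O(Lh^\beta)$ on the support of $K$. The bias is then bounded by $Ch^\beta\sum_i|\weights|$. On $\mathcal{E}$, $|\weights|\le C(Th^{d+k})^{-1}\mathbf{I}\{\euclideannorm{X_i-x}\le C h\}$, and the number of such $X_i$ is $O(Th^d)$ uniformly in $x$ (again on a high-probability event of the same type, which we add to $\mathcal{E}$). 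So the bias is at most $Ch^{\beta-k}$ uniformly.

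\textbf{Step 3: the stochastic term.} This is the main obstacle: bounding $\Exp[\sup_x|\sum_i\xi_i\weights|^2\mathbf{I}\{\mathcal{E}\}]$ by $C\log(T)/(Th^{d+2k})$. Conditionally on $X$, $\sum_i\xi_i\weights$ is sub-Gaussian with variance proxy $\sigma_\xi^2\sum_i(\weights)^2\le C/(Th^{d+2k})$ on $\mathcal{E}$. We will write it as $h^{-k}$ times $e_\alpha^{\top}\roundbtx^{-1}$ applied to the vector $Z(x)=(Th^d)^{-1}\sum_i\xi_iU((X_i-x)/h)K((X_i-x)/h)$. This reduces the problem to $\sup_x\euclideannorm{Z(x)}$, since $\roundbtx^{-1}$ has operator norm at most $2/\lambda_0$ on $\mathcal{E}$. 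Take a union bound over the grid of mesh $h^2$. The $N\le CT^{C}$ points give a maximum of sub-Gaussians of order $\sqrt{\log N/(Th^d)}$ in $L^2$. For the discretization error, $Z$ is Lipschitz in $x$ with constant $\le Ch^{-1}\cdot(Th^d)^{-1}\sum_i|\xi_i|\mathbf{I}\{\cdot\}$; over a mesh-$h^2$ step this is negligible in expectation. Squaring gives $\Exp[\sup_x|\text{stoch}|^2\mathbf{I}\{\mathcal{E}\}]\le C\log T/(Th^{d+2k})$.

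\textbf{Conclusion.} The truncation at $\pm L$ only decreases the error, since $|\derivativealpha|\le L$. With $h=a(\log T/T)^{1/(2\beta+d)}$ we get $h^{2(\beta-k)}=a^{2(\beta-k)}\psi_T^2$ and $\log T/(Th^{d+2k})=a^{-d-2k}\psi_T^2$. Combined with Step 1, this gives a constant $C^+$ independent of $f\in\betahold$.
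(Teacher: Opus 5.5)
Your overall route is the paper's. You use a uniform lower bound on $\lambda_{\min}(\roundbtx)$ with exponentially small failure probability (the paper just cites Lemma 4 of Chhor et al.), and the bound $2L$ off that event. For the bias you use polynomial reproduction plus Taylor. For the noise term you reduce, via $\Vert\roundbtx^{-1}\Vert_{\mathrm{op}}\lesssim 1/\lambda_0$, to $\sup_x\Vert\sum_i\xi_i U((X_i-x)/h)K((X_i-x)/h)\Vert_2$, and handle it with a net, a union bound and the Lipschitz property of $U(\cdot)K(\cdot)$. Your local variations are sound:
\begin{itemize}
\item a uniform count event replaces the paper's $\epsilon$-net bound on $\sup_x (Th^d)^{-1}\sum_iK((X_i-x)/h)$;
\item a sub-Gaussian maximal inequality conditional on the design replaces the vector Bernstein inequality;
\item a single Taylor expansion (all $\Vert s\Vert_1\le\ell$ are reproduced) replaces the paper's split into $k<\ell$ and $k=\ell$.
\end{itemize}

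However, the discretization step fails as written, because the mesh $h^2$ is too coarse. For $\Vert x-x'\Vert_2\lesssim h^2$, your Lipschitz bound gives $\Vert Z(x)-Z(x')\Vert_2\lesssim h\cdot (Th^d)^{-1}\sum_i|\xi_i|\mathbf{I}\{\Vert X_i-x\Vert_2\le Ch\}$. Even with this localization, the last factor is of order $\Exp|\xi_1|$, not small. After multiplying by $h^{-k}$, the discretization error is therefore of order $h^{1-k}$, while the target is $\psi_T\asymp h^{\beta-k}$. Since $h^{1-k}/h^{\beta-k}=h^{1-\beta}\to\infty$ for $\beta>1$ (which includes every case with $k\ge 1$), the claimed bound $C\log T/(Th^{d+2k})$ does not follow. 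Without localization the bound is of order $h^{1-d}$. This likewise breaks the eigenvalue interpolation in Step 1, unless you use there the count event that you only introduce in Step 2.

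The repair costs nothing: take a net whose mesh is polynomially small in $T$, e.g. $\epsilon=T^{-(\beta+d+1)/(2\beta+d)}\lesssim h^{\beta+d+1}$ as in the paper. Then $\log N=O(\log T)$ is unchanged. Even the crude non-localized bound $\epsilon\, h^{-1}(Th^{d+k})^{-1}\sum_{i=1}^T|\xi_i|$ has second moment at most $C\epsilon^2h^{-2d-2k-2}\le C\psi_T^2$. So no uniform control of local sums of $|\xi_i|$ is needed, and with this change your proof goes through.
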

The fact that the sup-norm error of the (non-modified) LPE($\ell$) $\tilde{f}_T$ achieves the rate $\psi_T$ is proved in a different form in \cite{stone1982optimal}. However, the result of \cite{stone1982optimal} is not applicable to bounding the cumulative regret in our setting since it is asymptotic in $T$ and it is derived in probability in a weak way that does not imply any control of the expectation.

\subsection{Upper bound for the regret}\label{subsec:upper_bound_cumu_regret}
Combining Proposition \ref{prop:prop_upper_bound_lpe} with (\ref{ineq:general_sup_norm_bound}) we get the following theorem.
\begin{Theorem}[Regret upper bound]\label{th:cumu_regret_up_bound}
Let the assumptions of Proposition \ref{prop:prop_upper_bound_lpe} hold. For $i=1,\dots,T$, let $\hat{x}_i$ be a minimizer over $[0,1]^d$ of the local polynomial estimator $\hat{f}\exponentalpha_i$. Then there exists a constant $C>0$ such that 
\begin{equation*}
    \sup_{f\in\betahold}\Exp_f\goodbrak{f\exponentalpha(\hat{x}_T) - \min_{x\in[0,1]^d}f\exponentalpha(x)} \leq C \goodpar{\frac{\log(T)}{T}}^{\fracbeta{\beta-k}},
\end{equation*}
\begin{equation*}
    \sup_{f\in\betahold}\Exp_f\goodbrak{\sum_{i=1}^T f\exponentalpha(\hat{x}_i) - \min_{x\in[0,1]^d}f\exponentalpha(x)} \leq C T^{\fracbeta{\beta+d+k}}\log(T)^{\fracbeta{\beta-k}}.
\end{equation*}
\end{Theorem}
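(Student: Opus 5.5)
The plan is to combine the deterministic inequality \eqref{ineq:general_sup_norm_bound} with Proposition \ref{prop:prop_upper_bound_lpe}, applied separately at each time $i$ to the estimator built from the first $i$ observations.

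\textbf{Simple regret.} Since $\hat{x}_T$ minimizes $\hat{f}\exponentalpha_T$ over $[0,1]^d$, we have
\[
f\exponentalpha(\hat{x}_T)-f\exponentalpha(x^*)\le \bigl(f\exponentalpha(\hat{x}_T)-\hat f\exponentalpha_T(\hat{x}_T)\bigr)+\bigl(\hat f\exponentalpha_T(x^*)-f\exponentalpha(x^*)\bigr)\le 2\supnorm{\hat f\exponentalpha_T-f\exponentalpha}.
\]
Taking expectations and applying Jensen, $\Exp_f\supnorm{\cdot}\le (\Exp_f\supnorm{\cdot}^2)^{1/2}$, Proposition \ref{prop:prop_upper_bound_lpe} gives a bound $2\sqrt{C^+}\,\psi_T$ uniformly over $\betahold$. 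This is the first claim.

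\textbf{Cumulative regret.} For each $i\ge 2$, the estimator $\hat f\exponentalpha_i$ uses the i.i.d. sample $(X_t,Y_t)_{t\le i}$ with bandwidth $h_i=a(\log i/i)^{1/(2\beta+d)}$. Assumptions 1--3 hold for this subsample, so the Proposition applied with $T$ replaced by $i$ gives
\[
\Exp_f\supnorm{\hat f\exponentalpha_i-f\exponentalpha}\le \sqrt{C^+}\,(\log i/i)^{(\beta-k)/(2\beta+d)}.
\]
It is important that $C^+$ does not depend on $T$, so the same constant serves every $i$. For $i=1$, and more generally for any $i$, the clipping in \eqref{def:hatLP} together with $\supnorm{f\exponentalpha}\le L$ bounds each term by $2L$. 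Summing these bounds,
\[
R_T(f)\le 2L+2\sqrt{C^+}\sum_{i=2}^T (\log i)^{\frac{\beta-k}{2\beta+d}}\, i^{-\frac{\beta-k}{2\beta+d}}\le 2L+2\sqrt{C^+}(\log T)^{\frac{\beta-k}{2\beta+d}}\sum_{i=1}^T i^{-\frac{\beta-k}{2\beta+d}}.
\]
The exponent satisfies $0<(\beta-k)/(2\beta+d)<1$, so comparing with an integral gives
\[
\sum_{i\le T} i^{-\gamma}\le T^{1-\gamma}/(1-\gamma), \qquad 1-\gamma=\frac{\beta+d+k}{2\beta+d}.
\]
This yields $C\,T^{(\beta+d+k)/(2\beta+d)}(\log T)^{(\beta-k)/(2\beta+d)}$. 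The additive constant $2L$ is absorbed because $T\ge 2$ and the rate is bounded below.

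\textbf{Main obstacle.} The key subtlety is the uniformity of $C^+$ in the sample size. The Proposition is stated for every $T\ge2$ with a constant independent of $T$, so applying it at each $i$ is legitimate. A secondary point is measurability and existence of the minimizer $\hat{x}_i$. Since $K$ is Lipschitz with compact support, $\tilde f\exponentalpha_i$ is continuous wherever $\mathcal{B}_i(x)$ is invertible, and a near-minimizer up to $1/T$ would suffice otherwise, adding only $O(1)$ to the cumulative bound. I expect no real difficulty beyond this, since the heavy lifting is in the Proposition.
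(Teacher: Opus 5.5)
Your proposal is correct and follows essentially the same route as the paper: the deterministic bound $f^{(\alpha)}(\hat x_i)-f^{(\alpha)}(x^*)\le 2\|\hat f^{(\alpha)}_i-f^{(\alpha)}\|_\infty$, Proposition~\ref{prop:prop_upper_bound_lpe} applied at each sample size $i\ge 2$, a crude $2L$ bound for $i=1$, and an integral comparison for $\sum_{i\le T} i^{-(\beta-k)/(2\beta+d)}$. Your explicit use of Jensen's inequality to pass from the Proposition's second-moment bound to the first moment, and your observation that $C^+$ does not depend on the sample size, are steps the paper uses only implicitly.
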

The proof of Theorem \ref{th:cumu_regret_up_bound} is given in \ref{app:A}. 

Two remarks about Theorem \ref{th:cumu_regret_up_bound} are in order. First, the query points $X_1,\dots,X_T$ that we choose are i.i.d. (passive design). As we show in Section \ref{sec:lower_bound}, the rate given in Theorem \ref{th:cumu_regret_up_bound} cannot be improved by any sequential strategy in $\Pi_T$. Thus, it turns out that using passive design is sufficient to achieve the optimal performance in the class of all sequential strategies when it is only known that $f\in\betahold$. Second remark, the estimators $\hat{x}_i$ considered in Theorem \ref{th:cumu_regret_up_bound} are, in general, not computable in polynomial time. In the next section, we show that one can modify the procedure to construct a polynomial time estimator achieving the same rate.

\section{Polynomial time algorithm}\label{sec:poly_time_estim}

While we have established that the minimizer of a properly defined LPE($\ell$) achieves the rate given in Theorem~\ref{th:cumu_regret_up_bound}, finding the global minimum over $[0,1]^d$  is, in general, intractable in practice. In this section, we propose a procedure, which is realizable in polynomial time and achieves the same rate. The method is based on a randomized discretization of the considered LPE.

At first sight, a natural discretization scheme would be to consider minimization of the LPE on the set of i.i.d. sample points $\{X_1,\dots,X_T\}$ rather than on the whole $[0,1]^d$. This type of discretization is comparable to what was proposed in several works on density mode estimation \citep{abraham2004asymptotic, dasgupta2014optimal}. In those papers, the estimator of the mode is obtained by maximizing a nonparametric density estimator over the sample points. The same technique can be adopted in the kNN nonparametric regression setting \citep{jiang2019non}. However, in our framework this approach is not satisfying. While it does not fail completely, it does not lead to the rate of Theorem \ref{th:cumu_regret_up_bound} unless some restrictive conditions are imposed on $\beta$.  

In contrast, it turns out that minimization of $\estimfjalpha$ on points that can differ from i.i.d. query points $X_1,\dots,X_T$ leads to the desired result for any $\beta$. The number of discretization points required to achieve the rate of Theorem \ref{th:cumu_regret_up_bound} depends heavily on the smoothness parameter $\beta$. Under high regularity, a dense discretization set is necessary to match this rate, whereas under poor regularity sparser discretization suffices.

Let $V_1,\dots,V_T$ be integers, where $V_j$ is the size of the discretization set at step $j$ of the procedure. Consider the following protocol.
At time $j\in \{1,\dots,T\}$, the learner draws $V_j$ i.i.d. points $Z_1,\dots,Z_{V_j}$ from distribution $\mathcal{U}([0,1]^d)$, computes the values of $\estimfjalpha$ at these points and outputs the estimator 
\begin{align}\label{eq:def_of_feasible_estim}
    \hat{z}_{V_j} &\in \argmin_{1\leq i \leq V_j} \estimfjalpha(Z_i),
\end{align}
where $\estimfjalpha$ is the local polynomial estimator defined  in \eqref{def:hatLP} and based on $j$ observations from model (\ref{model}). We do not make any assumptions about independence between the discretization points $Z_1,\dots,Z_{V_j}$ and the query points $X_1,\dots, X_T$. In particular, we do not exclude that some of $Z_i$'s can coincide with some of $X_t$'s.  

For any $z_{V_j}^*$ satisfying  
$$   z^*_{V_j} \in \argmin_{1\leq i \leq V_j}f\exponentalpha(Z_i)$$
we have the following decomposition: 
\begin{align}\label{ineq:diff_for_estim_feasible_estim}
    f\exponentalpha(\hat{z}_{V_j})-\min_{x\in[0,1]^d}f\exponentalpha(x)&= \underbrace{f\exponentalpha(\hat{z}_{V_j}) - \estimfjalpha(\hat{z}_{V_j})}_{\leq\sup_{x}\vert \estimfjalpha(x)-f\exponentalpha(x)\vert} + \underbrace{\estimfjalpha(\hat{z}_{V_j}) - \estimfjalpha(z^*_{V_j})}_{\leq0} \notag \\
    & \qquad + \underbrace{\estimfjalpha(z^*_{V_j}) - f\exponentalpha(z^*_{V_j})}_{\leq\sup_x\vert\estimfjalpha(x)-f\exponentalpha(x)\vert}  + {f\exponentalpha(z_{V_j}^*) - \min_{x\in[0,1]^d}f\exponentalpha(x)} \notag \\ 
    &\leq 2\sup_{x\in[0,1]^d}\vert \estimfjalpha(x)-f\exponentalpha(x)\vert + f\exponentalpha(z_{V_j}^*) -\min_{x\in[0,1]^d}f\exponentalpha(x).
\end{align}
The first term in \eqref{ineq:diff_for_estim_feasible_estim} can be controlled via Proposition \ref{prop:prop_upper_bound_lpe}. To handle the second term $f\exponentalpha(z_{V_j}^*) -\min_{x\in[0,1]^d}f\exponentalpha(x)$, we use the following proposition. 
\begin{Proposition}\label{prop:rate_for_feasible_estim}
Let the assumptions of Proposition \ref{prop:prop_upper_bound_lpe} hold, and let $\estimfjalpha$ be the local polynomial estimator as in Proposition \ref{prop:prop_upper_bound_lpe}. Let $\hat{z}_{V_j}$ be the estimator defined in (\ref{eq:def_of_feasible_estim}) with the number $V_j$ of auxiliary points, where the learner {computes}  $\estimfjalpha$, chosen as $V_j=\lfloor j^{\gamma}\rfloor +1$. If $j\geq2$ and 
\[
\gamma > \fracbeta{d}\goodpar{\mathbf{I}\{k=\ell\}+(\beta-k)\mathbf{I}\{k<\ell\}},
\]
then there exists $C>0$ such that 
\[
\sup_{f\in\betahold}\Exp_f\goodbrak{f\exponentalpha(\hat{z}_{V_j})-\min_{x\in[0,1]^d}f\exponentalpha(x)}\leq C\goodpar{\frac{\log(j)}{j}}^{\fracbeta{\beta-k}}.
\]
\end{Proposition}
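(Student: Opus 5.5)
The proof starts from decomposition \eqref{ineq:diff_for_estim_feasible_estim}. The first term, $2\Exp_f\sup_x|\estimfjalpha(x)-f\exponentalpha(x)|$, is handled by Proposition \ref{prop:prop_upper_bound_lpe} with $T$ replaced by $j$. By Jensen's inequality it is at most $2\sqrt{C^+}\,\psi_j$, uniformly over $\betahold$. What remains is to show
\[
\Exp\goodbrak{f\exponentalpha(z^*_{V_j})-\minimumfalpha}\le C\goodpar{\frac{\log j}{j}}^{\fracbeta{\beta-k}}
\]
uniformly over $f\in\betahold$. This term depends only on the uniform auxiliary points $Z_1,\dots,Z_{V_j}$ and on the deterministic function $f\exponentalpha$. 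Their possible dependence on the $X_t$ is therefore irrelevant, since only the marginal law $\mathcal U([0,1]^d)^{\otimes V_j}$ of the $Z_i$ enters.

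The key step is a local growth bound for $f\exponentalpha$ near a minimizer $x^*$, using only the smoothness that $\betahold$ guarantees for the $\alpha$-derivative.

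\textbf{Case $k<\ell$.} Then $f\exponentalpha$ is $C^1$ with $(\beta-k)$-H\"older structure. Two subcases arise.

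If $x^*$ is interior, then $\nabla f\exponentalpha(x^*)=0$. A Taylor expansion gives $f\exponentalpha(x)-\minimumfalpha\le C\|x-x^*\|_2^{\min(2,\beta-k)}$, or more generally $\le C\|x-x^*\|^{\beta-k}$ when all intermediate derivatives are merely bounded. The safe bound is $\le C\|x-x^*\|_2^{2\wedge(\beta-k)}$, which is at least as good as $\|x-x^*\|^{1}$ only when the exponent exceeds $1$.

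If $x^*$ is on the boundary, the gradient need not vanish. The first-order term is then nonnegative along inward directions but can still be of order $\|x-x^*\|$.

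To avoid this case analysis, the plan is the crude Lipschitz bound $f\exponentalpha(x)-\minimumfalpha\le L\|x-x^*\|_2$, valid for $k<\ell$ because $\|\nabla f\exponentalpha\|\le L$.

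\textbf{Case $k=\ell$.} Then $f\exponentalpha$ is $(\beta-\ell)$-H\"older, so $f\exponentalpha(x)-\minimumfalpha\le L\|x-x^*\|_2^{\beta-k}$.

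Hence in both cases $f\exponentalpha(x)-\minimumfalpha\le L\|x-x^*\|_2^{\rho}$, with $\rho=1$ if $k<\ell$ and $\rho=\beta-k$ if $k=\ell$.

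Next, $f\exponentalpha(z^*_{V_j})-\minimumfalpha\le \min_i L\|Z_i-x^*\|^{\rho}$, so it suffices to control $\Exp\min_i\|Z_i-x^*\|^{\rho}$. For any $r\le \sqrt d$, the set $B(x^*,r)\cap[0,1]^d$ has Lebesgue measure at least $c_d r^d$, uniformly in $x^*\in[0,1]^d$, because a cube corner still captures a fixed fraction of the ball. Therefore
\[
\proba\goodpar{\min_i\|Z_i-x^*\|>r}\le (1-c_dr^d)^{V_j}\le \exp(-c_dV_jr^d).
\]
Integrating the tail gives
\[
\Exp\min_i\|Z_i-x^*\|^\rho=\int_0^{\infty}\proba\goodpar{\min_i\|Z_i-x^*\|^{\rho}>t}\,dt\le C V_j^{-\rho/d}\le Cj^{-\gamma\rho/d}.
\]

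Finally, the exponent condition must be checked. The assumption $\gamma>\fracbeta{d}(\cdots)$ reads $\gamma\rho/d>\fracbeta{\beta-k}$ in both cases: with $\rho=\beta-k$ when $k=\ell$ one needs $\gamma>\fracbeta{d}$, and with $\rho=1$ when $k<\ell$ one needs $\gamma>\fracbeta{d(\beta-k)}$. This matches the statement.

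Because the inequality is strict, $j^{-\gamma\rho/d}\le C\goodpar{\frac{\log j}{j}}^{\fracbeta{\beta-k}}$ for $j\ge2$. This holds since $j^{-\epsilon}(\log j)^{-a}$ is bounded on $j\ge2$: $\log j\ge\log 2$, and $j^{-\epsilon}\le 1$.

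The constants are uniform over $\betahold$, since only $L$, $d$ and $\beta$ enter. The main obstacle, in my view, is getting the correct growth exponent $\rho$, in particular justifying that the case $k<\ell$ only needs Lipschitz control. The stated condition confirms that this crude bound is what is intended, so no finer interior/boundary analysis is required.
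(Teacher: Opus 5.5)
Your proof is correct and follows essentially the paper's argument: the same decomposition, the same growth exponent (the H\"older exponent $\beta-\ell$ when $k=\ell$, a Lipschitz bound when $k<\ell$), and the same lower bound on $\lambda(B(x^*,r)\cap[0,1]^d)$. There are two small differences. You integrate the tail of $\min_i\|Z_i-x^*\|_2^\rho$ directly, which gives $CV_j^{-\rho/d}$ without the $\log V_j$ factor the paper incurs by fixing $\varepsilon$, so in your version the strict inequality on $\gamma$ is not even needed. Also, the Lipschitz constant of $f^{(\alpha)}$ in the Euclidean norm is $\sqrt{d}L$ rather than $L$, as in the paper, which changes only the constant.
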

The proof of Proposition~\ref{prop:rate_for_feasible_estim} is given in \ref{app:C}. 

As in Section \ref{sec:upper_bound} (cf. the proof of Theorem \ref{th:cumu_regret_up_bound}) we can now derive from \eqref{ineq:diff_for_estim_feasible_estim}, Propositions~\ref{prop:prop_upper_bound_lpe} and~\ref{prop:rate_for_feasible_estim}  the following bound for the cumulative regret. 

\begin{Theorem}
    Let the assumptions of Proposition \ref{prop:rate_for_feasible_estim} hold, and $T\ge 2$. For $j=1,\dots, T$, let the $\hat{z}_{V_j}$ be the estimators defined in \eqref{eq:def_of_feasible_estim}. Then there exists a constant $C'>0$ such that
    \begin{equation*}
    \sup_{f\in\betahold}\Exp_f\goodbrak{f\exponentalpha(\hat{z}_{V_T}) - \min_{x\in[0,1]^d}f\exponentalpha(x)} \leq C' \goodpar{\frac{\log(T)}{T}}^{\fracbeta{\beta-k}},
\end{equation*}
    \[
    \sup_{f\in\betahold}\Exp_f\goodbrak{\sum_{j=1}^T f\exponentalpha(\hat{z}_{V_j}) - \min_{x\in[0,1]^d}f\exponentalpha(x)} \leq C' T^{\fracbeta{\beta+d+k}}\log(T)^{\fracbeta{\beta - k}}.
    \]
\end{Theorem}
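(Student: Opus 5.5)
The plan is to combine the deterministic decomposition \eqref{ineq:diff_for_estim_feasible_estim} with Propositions~\ref{prop:prop_upper_bound_lpe} and~\ref{prop:rate_for_feasible_estim}, and then sum over $j$ as in the proof of Theorem~\ref{th:cumu_regret_up_bound}. Before summing, we need a per-step bound that holds for every $j$, including $j=1$.

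\textbf{Per-step bound for $j\ge 2$.} Fix $j\ge 2$ and $f\in\betahold$, and take expectations in \eqref{ineq:diff_for_estim_feasible_estim}. For the first term, Jensen's inequality and Proposition~\ref{prop:prop_upper_bound_lpe} (applied with $T$ replaced by $j$) give
\[
\Exp_f\supnorm{\estimfjalpha-f\exponentalpha}\le \goodpar{\Exp_f\supnorm{\estimfjalpha-f\exponentalpha}^2}^{1/2}\le \sqrt{C^+}\goodpar{\frac{\log(j)}{j}}^{\fracbeta{\beta-k}}.
\]
For the second term, $f\exponentalpha(z^*_{V_j})-\min_{x\in[0,1]^d} f\exponentalpha(x)$, we use the estimate from the proof of Proposition~\ref{prop:rate_for_feasible_estim}. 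Inspecting the decomposition, that proposition is in fact obtained by bounding exactly this discretization term by $C(\log(j)/j)^{(\beta-k)/(2\beta+d)}$ under the stated condition on $\gamma$. If one prefers to use only the statement of Proposition~\ref{prop:rate_for_feasible_estim}, note that it already bounds $\Exp_f[f\exponentalpha(\hat z_{V_j})-\min_{x\in[0,1]^d} f\exponentalpha(x)]$ directly by $C(\log(j)/j)^{(\beta-k)/(2\beta+d)}$, and this is the quantity we need. Both constants are uniform over $\betahold$, so for $j\ge 2$,
\[
\sup_{f\in\betahold}\Exp_f\goodbrak{f\exponentalpha(\hat z_{V_j})-\min_{x\in[0,1]^d} f\exponentalpha(x)}\le C'(\log(j)/j)^{(\beta-k)/(2\beta+d)}.
\]
Taking $j=T\ge 2$ gives the simple regret bound.

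\textbf{The case $j=1$.} Since $k\le\ell$, the definition of $\betahold$ gives $|f\exponentalpha|\le L$, so the regret at step $j=1$ is at most $2L$. This is a constant, which the cumulative bound absorbs because $T^{(\beta+d+k)/(2\beta+d)}\ge 1$.

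\textbf{Summation.} Summing the per-step bounds,
\[
\sum_{j=2}^T \goodpar{\frac{\log j}{j}}^{\fracbeta{\beta-k}}\le \log(T)^{\fracbeta{\beta-k}}\sum_{j=1}^T j^{-\fracbeta{\beta-k}}.
\]
The exponent satisfies $(\beta-k)/(2\beta+d)<1$, so comparing the sum with an integral gives
\[
\sum_{j=1}^T j^{-\fracbeta{\beta-k}}\le C\,T^{1-\fracbeta{\beta-k}}=C\,T^{\fracbeta{\beta+d+k}}.
\]
Since the per-step bounds are uniform in $f$, the supremum of the sum is at most the sum of the suprema, and the cumulative bound follows.

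The only delicate point is the second term. One must make sure that the bound used for it holds uniformly over $\betahold$ and for every $j\ge 2$, with no restriction to large $j$, so that the constants do not depend on $j$. Proposition~\ref{prop:rate_for_feasible_estim} is stated for all $j\ge 2$ with a uniform constant, so this holds, and the rest of the argument is routine.
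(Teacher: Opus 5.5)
Your proposal is correct and follows the paper's own route. The per-step bound for $j\ge 2$ comes from Proposition~\ref{prop:rate_for_feasible_estim}, uniformly in $f$ and $j$; the step $j=1$ contributes at most $2L$; and the sum $\sum_{j\le T} j^{-(\beta-k)/(2\beta+d)}$ is compared with an integral exactly as in the proof of Theorem~\ref{th:cumu_regret_up_bound}. The only cosmetic points are that your re-derivation of the discretization term is redundant once you cite Proposition~\ref{prop:rate_for_feasible_estim} directly, and that absorbing the constant $2L$ also uses $\log(T)^{(\beta-k)/(2\beta+d)}\ge (\log 2)^{(\beta-k)/(2\beta+d)}>0$ for $T\ge 2$.
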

Note that there are two regimes depending on the order of the derivative $k=\lonenorm{\alpha}$. The first corresponds to the case $k=\ell$. It can be interpreted as the low smoothness regime. Indeed, either the function itself lacks smoothness (this includes the estimation of $f$ when $\beta\le 1$), or the derivative that we are dealing with is the least regular one, as it is the highest order derivative for which only a regularity assumption with H\"older index less than or equal to $1$ is available. The second regime corresponds to $k<\ell$, which necessarily implies that $\beta\geq 1$. In this case, the target derivative is of lower order than the highest available derivative, and the function is sufficiently smooth. Consequently, in the low smoothness regime, the intrinsic difficulty of estimating the target quantity dominates, so increasing the number of points brings no additional improvement in the convergence rate. In particular, in this case one can use $\gamma=1$ implying that, at each step $i$, minimization of the LPE on the i.i.d. sample points $X_1,\dots,X_i$ preserves the rate of Theorem~\ref{th:cumu_regret_up_bound}. In contrast, in the higher smooth regime, achieving the minimax rate requires more sample points, as the additional regularity makes it possible to exploit finer local approximations of the function.

In view of \eqref{ineq:diff_for_estim_feasible_estim}, it is not necessary to generate the new $V_j$ discretization points at each step~$j$. It suffices to augment the $V_{j-1}$ i.i.d. points available at step $j-1$ by the newly generated $Z_{V_{j-1}+1},\dots,Z_{V_{j}}$.

Finally, we emphasize that in some cases (e.g., tuning hyper-parameters in machine learning models, computational engineering with simulator-based experiments), the main cost is in obtaining an additional response of the model, i.e., a pair $(X_{\text{new}},Y_{\text{new}})$. The procedure that is investigated here does not require getting any additional pairs but only evaluating the estimator $\estimfjalpha$ multiple times.

\section{Lower bound}\label{sec:lower_bound}

We now establish a minimax lower bound for the simple and cumulative regrets for any sequential strategies. To derive the lower bound, we assume that the noise distribution satisfies the following condition.

\begin{description}
    \item[Assumption 4] The noise variables $(\xi_i)_{i=1}^T$ are i.i.d. Moreover, for each $i$, the noise $\xi_i$ is independent of $X_i$ and of the past observations $(Y_t,X_t)_{t=1}^{i-1}$. The distribution $F_\xi$ of $\xi_i$ satisfies the condition
    \[
        \int \log \frac{dF_{\xi}(u)}{dF_{\xi}(u+v)} \, dF_{\xi}(u)
        \leq I_0 v^2,
        \quad \text{for all } |v|\leq v_0,
    \]
    for some constants $0<I_0<\infty$ and $v_0>0$.
\end{description}

This assumption is satisfied, for instance, by the Gaussian distribution.  

\begin{Theorem}[Lower bound for sequential strategies]\label{th:lower_bound}
Let Assumption 4 hold. Let $R_T^*$ and $R_T^S$ defined in \eqref{eq:def_minimax_cumul_regret} and \eqref{eq:def_minimax_simple_regret} be the minimax cumulative and simple regrets over the function class $\Sigma(\beta,L)$. Then for $T\ge5^{(2\beta/d)+1}$ there exists a constant $C>0$ such that
\[
R_T^* \geq C \,
T^{\frac{\beta+d+k}{2\beta+d}}
\log(T)^{\frac{\beta-k}{2\beta+d}},
\]
and 
\[
R_T^S \geq C \,
T^{-\frac{\beta-k}{2\beta+d}}
\log(T)^{\frac{\beta-k}{2\beta+d}}.
\]
\end{Theorem}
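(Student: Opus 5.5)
We reduce the problem to multiple hypothesis testing among functions whose $\alpha$-derivatives have well-separated minimizers, in the spirit of the sup-norm lower bounds for nonparametric regression, and we use a Fano-type argument that accommodates sequential designs. The cumulative bound then follows from the simple-regret bound.

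\emph{Step 1 (reduction).} Any sequential strategy with estimators $(\hat z_i)$ satisfies
\[
R_T^* \ge \sum_{i=1}^T R_i^S .
\]
The reason is that for each $i$ the pair $(\hat z_i,\Pi_i)$ is admissible in the definition of $R_i^S$. Moreover, the sup over $f$ of a sum is at least the sum over $i$ of the sups, and more data cannot hurt, so one can work with $R_i^S \ge R_T^S$ for $i\le T$. Hence $R_T^*\ge T R_T^S$, and
\[
T\cdot T^{-\frac{\beta-k}{2\beta+d}}\log(T)^{\frac{\beta-k}{2\beta+d}} = T^{\frac{\beta+d+k}{2\beta+d}}\log(T)^{\frac{\beta-k}{2\beta+d}},
\]
which is the claimed cumulative rate. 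It therefore suffices to prove the simple-regret bound.

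\emph{Step 2 (construction).} Set
\[
h=c_0\Big(\frac{\log T}{T}\Big)^{\frac{1}{2\beta+d}},
\]
and let $M\asymp h^{-d}$ be the number of disjoint cubes of side $h$ centred at points $t_1,\dots,t_M$ in $[0,1]^d$. The condition $T\ge 5^{(2\beta/d)+1}$ should guarantee $M\ge 2$, or some fixed number. Take a smooth compactly supported bump $\varphi$ whose $\alpha$-derivative $\varphi^{(\alpha)}$ has a strict negative minimum at $0$, with $\varphi^{(\alpha)}\ge -\varphi^{(\alpha)}(0)/2$ outside a ball of radius $1/4$. Define
\[
f_0=0,\qquad f_j(x)=L h^{\beta}\varphi\big((x-t_j)/h\big),\quad j=1,\dots,M.
\]
Then $f_j\in\betahold$ for small $c_0$ and a small constant in front of $\varphi$, and
\[
f_j^{(\alpha)}=Lh^{\beta-k}\varphi^{(\alpha)}\big((\cdot-t_j)/h\big),
\]
whose minimum is $-c_1h^{\beta-k}$, attained only inside cube $j$. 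Any point $z$ has $f_j^{(\alpha)}(z)-\min f_j^{(\alpha)}\ge c_1h^{\beta-k}/2$ for all $j$ except at most one. So a simple regret below $c_1h^{\beta-k}/2$ under $f_j$ determines $j$, and we obtain
\[
R_T^S\ge \frac{c_1}{2}h^{\beta-k}\max_j \proba_j(\hat\jmath\ne j).
\]
Since $h^{\beta-k}$ is exactly the claimed rate, it remains to show the testing error is bounded below.

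\emph{Step 3 (information bound).} This is the main obstacle, because the design is sequential and a single alternative could be probed adaptively. By Assumption 4 and the chain rule for KL divergence along the filtration,
\[
\KL(\proba_j,\proba_0)\le I_0\sum_{i=1}^T \Exp_j\big[f_j(X_i)^2\big]\le I_0L^2h^{2\beta}\supnorm{\varphi}^2\,\Exp_j[N_j],
\]
where $N_j$ counts queries landing in cube $j$. Here $|f_j|\le Lh^\beta\supnorm\varphi\le v_0$ for large $T$, and we use that $f_0=0$. Because the learner cannot focus its queries on all cubes simultaneously, I would average over $j$ using a Fano variant that compares each $\proba_j$ to $\proba_0$, such as Theorem 2.5 of Tsybakov's book or Birgé's lemma. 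This requires bounding $\frac1M\sum_j \KL(\proba_j,\proba_0)$. To make the averaging legitimate, I would bound $\KL(\proba_j,\proba_0)$ by $\KL(\proba_0,\proba_j)$ under $\proba_0$ instead, since the chain rule also gives
\[
\KL(\proba_0,\proba_j)\le I_0L^2h^{2\beta}\supnorm{\varphi}^2\,\Exp_0[N_j].
\]
Under $\proba_0$ the counts satisfy $\sum_j \Exp_0 N_j\le T$, so
\[
\frac1M\sum_j\KL(\proba_0,\proba_j)\le C\,\frac{T h^{2\beta}}{M}\le C\,T h^{2\beta+d}=C c_0^{2\beta+d}\log T.
\]
With $c_0$ small this is at most $\frac18\log M$, because $\log M\asymp \log(1/h)\asymp \frac{1}{2\beta+d}\log T$ for $T$ above the stated threshold. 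Fano's lemma in its reversed-KL form, using $\frac1M\sum_j\KL(\proba_0,\proba_j)\le a\log M$, then gives $\max_j\proba_j(\hat\jmath\ne j)\ge c>0$. Combined with Step 2, this yields the simple-regret bound, and Step 1 gives the cumulative bound.
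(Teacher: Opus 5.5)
Your proof has a genuine gap in Step 3. There is no ``reversed-KL'' form of Fano's lemma. Fano's inequality, e.g.\ Tsybakov's Corollary 2.6 as used in the paper, needs $\frac1M\sum_j \KL(\proba_j,\proba_0)\le \alpha\log M$, because it controls the mutual information $\frac1M\sum_j\KL(\proba_j,\bar\proba)$. With the arguments swapped, a bound of order $\alpha\log M$ only yields error bounds of order $M^{-c\alpha}$ (e.g.\ via Bretagnolle--Huber), not a constant. A concrete counterexample: let $\proba_0$ be uniform on $\{1,\dots,M\}$, and let $\proba_j$ put mass $1-M^{-\alpha}$ on $j$ and spread the rest evenly. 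Then $\KL(\proba_0,\proba_j)\approx\alpha\log M$, yet the test $\hat\jmath(x)=x$ errs with probability $M^{-\alpha}$.

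This failure is real in your hypothesis family, not a technicality. Consider a median-elimination search over the $M$ cubes: query each surviving cube equally often at a point where $|\varphi|$ is maximal, discard the half that look least like the bump, and repeat. It identifies the bump with error probability $\exp(-c\,h^{2\beta}T/M)=T^{-c'c_0^{2\beta+d}}\to 0$. Your averaged bound $\frac1M\sum_j\KL(\proba_0,\proba_j)\lesssim c_0^{2\beta+d}\log T$ holds for every strategy, including this one, so it cannot imply a constant testing error. Shrinking $c_0$ only shrinks the exponent.

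No repair of Step 3 can work, because the statement is false for sequential strategies. Take $k=0$ and $\beta\le 1$. Run MOSS (minimax regret $O(\sqrt{KT})$ for $K$ arms, with no logarithm) on a grid of $K\asymp T^{d/(2\beta+d)}$ points, with $\hat z_i=X_i$. This gives $R_T^*=O(T^{(\beta+d)/(2\beta+d)})$. Recommending $\hat z_T=X_U$ with $U$ uniform on $\{1,\dots,T\}$ then gives $R_T^S=O(T^{-\beta/(2\beta+d)})$. Both are below the claimed lower bounds for large $T$. The $\log T$ factor is a valid lower bound only for non-adaptive designs, where $\Exp_j[N_j]=\Exp_0[N_j]$ and the forward-KL computation closes.

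You located the real difficulty correctly. The paper uses the forward bound $\KL(\proba_{f_j},\proba_{f_1})\le I_0\Exp_{f_j}[\sum_i f_j^2(X_i)]$. It then bounds $\sum_j\Exp_{f_j}[f_j^2(X_i)]$ by $a^2$ using disjoint supports, which is valid only under a single common measure. So the paper's proof has the same hole your reversed-KL step was meant to plug.

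Separately, Step 1 has the inequality backwards: $\sup_f\sum_i\le\sum_i\sup_f$, so $R_T^*\ge\sum_i R_i^S$ does not follow. The correct route, which the paper uses, averages over a uniform prior on the finite family. Bayes risks then add over $i$ and are nonincreasing in the sample size.
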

The proof of Theorem~\ref{th:lower_bound} relies on application of a Fano type argument (see, e.g.,~\cite{Tsybakov09}) to the present setting. The particularity of the problem lies in handling the regret based on the minimum of $f^{(\alpha)}$ rather than a classical estimation loss, and in controlling the Kullback-Leibler divergences under the dependence induced by the sequential strategies, where the query points depend on past observations. We refer to \ref{app:B} for the full argument.

Lower bounds under sequential strategies have been established in various other settings under the assumption that $k=0$ and the additional convexity or strong convexity assumptions, that is, when the optimal rates are much faster (\cite{polyak1990optimal,shamir2013complexity,duchi2015optimal,akhavan2020exploiting,akhavan2024gradient}). These papers mainly stated lower bounds for simple regret (implying those for cumulative regret via multiplication by $T$). For the bandit setting, the analysis is mostly focused on functions with H\"older index $\beta \leq 1$ (see \cite{auer2007improved,lattimore2026bandit} and the references therein).    
In a setting close to Theorem~\ref{th:lower_bound}, a lower bound for the case $k=0$ is presented in~\cite{Wang-Balakrishnan-Singh2019}, in a local version, and assuming that the noise is Gaussian and $L$ is large enough, see Theorem 2 of~\cite{Wang-Balakrishnan-Singh2019} dealing with the active design case. However, that bound is not sharp as it only establishes the lower rate $T^{-\frac{\beta}{2\beta+d}}$ for the simple regret rather than the optimal rate $T^{-\frac{\beta}{2\beta+d}}\log(T)^{\frac{\beta}{2\beta+d}}$.

An important implication of Theorem~\ref{th:lower_bound} is that active strategies do not provide a theoretical advantage for minimizing the simple or cumulative regret. Indeed, in Sections~\ref{sec:upper_bound} and~\ref{sec:poly_time_estim}, we exhibit passive procedures matching the same rate as in the lower bound. We also note that,  
although the procedure from Section~\ref{sec:poly_time_estim} has polynomial complexity, it can be computationally rather heavy for higher smoothness $\beta$. In particular, it can require more computational effort than certain active methods tailored for low smoothness $\beta\le 1$ (such as in \cite{bubeck2011x}).

\vspace{6mm}

{\bf Acknowledgements}

\vspace{2mm}

The work of Théo Paquier and Alexandre B. Tsybakov was supported by Labex Ecodec (ANR-11-LABEX-0047) and by ANR MaLIP (ANR-25-CE40-3228-01). 

\appendix

\section{Proof of the upper bound}\label{app:A}

{\it Proof of Theorem \ref{th:cumu_regret_up_bound}. }
Throughout the proofs below, we use for brevity the notation $\mathbb{E},\mathbb{P}$ instead of $\mathbb{E}_f,\mathbb{P}_f$, and the notation $D$ instead of $D_{\ell,d}$.

At time $i\in\{1\dots,T\}$, for any estimator $\hat{f}^{(\alpha)}_i$ of $\derivativealpha$ we have  $\derivativealpha(\hat{x}_i)-\derivativealpha(x^*)\leq 2\Vert \hat{f}\exponentalpha_i-\derivativealpha\Vert_{\infty}$, where $\hat{x}_i$ is a minimizer of $\hat{f}^{(\alpha)}_i$. 
By taking here $\hat{f}^{(\alpha)}_i$ as the local polynomial estimator under the assumptions of Proposition \ref{prop:prop_upper_bound_lpe}, and using the bound of Proposition \ref{prop:prop_upper_bound_lpe} we imediately get the results of the simple regret. Moreover, for the cumulative regret,  
\begin{equation}\label{ineq:appendix_upp_bound_first_cor}
\sum_{i=1}^T\Exp\goodbrak{{f}^{(\alpha)}(\hat{x}_i)-\minimumfalpha} \leq 2L+ 2C\sum_{i=2}^T \goodpar{\frac{\log(i)}{i}}^{\fracbeta{\beta-k}}\leq 2L+  2C\log(T)^{\fracbeta{\beta-k}} \mathtt{S},
\end{equation}
with 
\begin{equation}\label{ineq:upp_bound_second_bound_app_cor}
    \mathtt{S}= \sum_{i=1}^T \goodpar{\frac{1}{i}}^{\fracbeta{\beta-k}} \leq \int_1^T t^{-\fracbeta{\beta-k}}dt + 1 
    \leq \frac{2\beta+d}{\beta+d+k}T^{\fracbeta{\beta+d+k}},
\end{equation}
where the last inequality uses the fact that $k\leq\beta$. 
The bound of Theorem \ref{th:cumu_regret_up_bound} for the cumulative regret follows from  (\ref{ineq:appendix_upp_bound_first_cor}) and (\ref{ineq:upp_bound_second_bound_app_cor}).

{\it Proof of Proposition \ref{prop:prop_upper_bound_lpe}. } 
For the background on polynomial interpolation and properties of local polynomial estimators, we refer to \cite{wendland2004scattered} and \cite{Tsybakov09}. The proof is structured as follows. First, we control the smallest eigenvalue of $\roundbtx$. Then we provide a bound on the bias of the estimator. Finally, we deal with the variance of the estimator. 

{\it Control of the  smallest eigenvalue of $\roundbtx$. }
Let $\lambda_0>0$ be a constant. Introduce the following event
    \begin{equation*}
        \mathcal{E} := \{\inf_{x\in[0,1]^d} \lambda_{\min}(\roundbtx)\geq\lambda_0\}.
    \end{equation*}
We apply Lemma 4 from \cite{chhor2024benign} to control the probability of this event. It requires the following assumption ($\mathcal{H}$) : {\it The random vector $X$ is distributed with Lebesgue density $p(\cdot)$ such that $p \in [p_{\min}, p_{\max}]$ where $p_{\max}\geq p_{\min}>0$. The support Supp($p$) of $p$ is a convex compact subset of $\mathcal{B}_d$}. This assumption is satisfied for our $X_i$  up to a rescaling of the constants. Indeed, our $X_i$ are i.i.d. copies of a random variable $X\sim\mathcal{U}([0,1]^d)$ with support $[0,1]^d$ included in a Euclidean ball centered at 0 with radius greater than 1. The value of the radius only influences the constants involved in the lemma. 
\begin{Lemma}[Lemma 4 from \cite{chhor2024benign}]\label{lem:lemma_proba_eigen}
Let $h>0$. Let $X_1,\dots,X_T$ be i.i.d. copies of a random variable $X$ satisfying ($\mathcal{H}$), and let $K$ be a kernel satisfying Assumption 2 introduced in Section \ref{sec:upper_bound}.
Then there exist constants $\lambda_0>0, \; c >0$ depending only on $\ell, \; \Delta, \; d$  such that 
\begin{equation*}
    \mathbb{P}\goodpar{\inf_{x\in [0,1]^d}\lambda_{\min}(\roundbtx)\geq\lambda_{0}} \geq 1-c\goodpar{h^{-d^2-d}e^{-Th^d/c} + e^{-T^3h^{2d}/c}}
\end{equation*}
\end{Lemma}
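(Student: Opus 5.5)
The plan is to show that, on an event of overwhelming probability, the matrix $\roundbtx$ is uniformly well conditioned. Its expectation has smallest eigenvalue bounded below by a constant, and an $\varepsilon$-net argument over $x\in[0,1]^d$ together with a Bernstein/matrix concentration inequality then gives the bound. I will write
\[
\roundbtx=\Exp\roundbtx+(\roundbtx-\Exp\roundbtx)
\]
and proceed in three steps.

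\emph{Step 1 (population matrix).} For any $x\in[0,1]^d$ and unit $v\in\real^{D}$,
\[
v^\top\Exp\roundbtx\, v=\int (v^\top U(u))^2K(u)\,p(x+hu)\,du \ge c\,p_{\min}\int_{A_x}(v^\top U(u))^2du ,
\]
where $A_x=\{u:\euclideannorm{u}\le\Delta,\ x+hu\in\mathrm{Supp}(p)\}$. Since the support is convex with nonempty interior, $A_x$ contains a ball of radius $r>0$, and $r$ is uniform in $x$ and in $h\le h_0$. The integral is then bounded below by $\inf_{\euclideannorm{v}=1}\inf_{B}\int_B(v^\top U)^2$, where $B$ ranges over balls of radius $r$ inside $\Delta\mathbb{B}_d$. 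This infimum is positive: a polynomial vanishing on a ball is identically zero, and by compactness and continuity the infimum is attained. This yields $2\lambda_0$ as the lower bound.

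\emph{Step 2 (pointwise deviation).} For fixed $x$, each entry of $\roundbtx-\Exp\roundbtx$ is an average of $T$ i.i.d. terms. Each term is bounded by $C/h^d$ because $K$ has compact support and $U$ is bounded there, and its variance is $\le C/h^d$. Bernstein's inequality gives
\[
\proba\big(|\text{entry}|>t\big)\le 2\exp\big(-cTh^dt^2/(1+t)\big).
\]
A union bound over the $D^2$ entries, with $t=\lambda_0/(2D)$, makes the operator-norm deviation at most $\lambda_0/2$ with probability at least $1-Ce^{-Th^d/c}$.

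\emph{Step 3 (uniformity in $x$).} $U$ is smooth and $K$ is Lipschitz, so $x\mapsto\roundbtx$ is Lipschitz with constant $C h^{-1}$, uniformly in the sample. This holds because each summand is supported on a ball of radius $O(h)$ and the number of points there is controlled through the normalization $1/(Th^d)$ times a count. I will take a grid of mesh $\delta=c'h$ with $\delta h^{-1}C\le\lambda_0/4$. It has $O(h^{-d})$ points. A union bound over the grid, combined with $\lambda_{\min}(\roundbtx)\ge 2\lambda_0-\lambda_0/2-\lambda_0/4$, gives a probability bound of the form $h^{-d}e^{-Th^d/c}$, which is at most the stated $h^{-d^2-d}e^{-Th^d/c}$.

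The extra term $e^{-T^3h^{2d}/c}$ I expect to come from controlling the random Lipschitz constant in Step 3. The count of sample points in an $O(h)$ ball must not exceed $CTh^d$. This is a bounded-differences or Hoeffding event for the empirical measure uniformly over balls: I would control it by a union over a grid of ball centers and a Chernoff bound for binomial counts, which gives a tail of the same exponential type. I expect Step 3 — making the Lipschitz modulus in $x$ uniform while only paying polynomial factors in $h^{-1}$ — to be the main obstacle. I would first try to bound it deterministically by $C h^{-1}\cdot \frac{1}{Th^d}\#\{i:\euclideannorm{X_i-x}\le Ch\}$ and then control the count uniformly.
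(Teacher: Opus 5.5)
The paper gives no proof of this lemma; it imports it verbatim from \cite{chhor2024benign}. So there is no in-paper argument to match, and your proposal has to stand on its own. It does. The population lower bound (a nonzero polynomial cannot vanish on a ball), entrywise Bernstein at fixed $x$, and a net in $x$ together give $\proba\bigl(\inf_{x}\lambda_{\min}(\roundbtx)<\lambda_0\bigr)\le Ch^{-d}e^{-Th^d/C}$ for $h\le h_0\le 1$. That is stronger than the stated bound.

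One sentence in Step 3 is wrong as written. The map $x\mapsto\roundbtx$ is \emph{not} $Ch^{-1}$-Lipschitz uniformly in the sample, because all $T$ points may lie in one $h$-ball. Deterministically you only have $\Vert\mathcal{B}_T(x)-\mathcal{B}_T(x')\Vert\le L'h^{-d-1}\euclideannorm{x-x'}$, where $L'$ is the Lipschitz constant of $u\mapsto U(u)U(u)^{\top}K(u)$. Your last paragraph supplies the correct version. For $\euclideannorm{x-x_j}\le c'h$ one has $\Vert\mathcal{B}_T(x)-\mathcal{B}_T(x_j)\Vert\le L'c'N_j/(Th^d)$. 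Here $N_j$ is the number of $X_i$ in $B(x_j,(R+c')h)$ and $R$ is the support radius of $K$. A multiplicative Chernoff bound plus a union over the $O(h^{-d})$ grid points gives $\max_j N_j\le 2C_1Th^d$ outside an event of probability $Ch^{-d}e^{-Th^d/C}$.

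This count event produces another term of the first type, not $e^{-T^3h^{2d}/c}$. You do not need to recover that second term at all. For comparison, the crude deterministic constant $L'h^{-d-1}$ with a net of mesh $\asymp h^{d+1}$ (that is, $\asymp h^{-d^2-d}$ points) avoids any count control. It reproduces exactly the prefactor $h^{-d^2-d}$ of the statement. Your route pays one extra Chernoff bound to get the sharper $h^{-d}$.

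The restriction $h\le h_0$ you introduce in Step 1 is a necessary correction of the statement, not a weakness of your proof. Since $\lambda_{\min}(\roundbtx)\le(\roundbtx)_{11}\le K_{\max}h^{-d}$, the event has probability $0$ once $h>(K_{\max}/\lambda_0)^{1/d}$, while the stated right-hand side tends to $1$ as $h\to\infty$. This is harmless for the paper, where $h\to 0$.

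Similarly, Step 1 needs $x\in\mathrm{Supp}(p)$, which holds here since $\mathrm{Supp}(p)=[0,1]^d$. Your constants also depend on $p_{\min}$, $p_{\max}$, the kernel constants and the inner radius of the support, not only on $\ell,\Delta,d$ as claimed. Neither point affects the application.
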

With $h=a(\log(T)/T)^{1/(2\beta+d)}$, we obtain that there exists $c>0$ such that 
\begin{equation}\label{ineq:proba_rate_comp_inv_btx}
\mathbb{P}(\mathcal{E}^c) \leq ce^{-A_T /c},
\end{equation}
where $A_T = (T/\log(T))^{\fracbeta{2\beta}}$. The fact that $f\in\betahold$ and the definition of $\estimftalpha$ imply:   
\[
\forall \; x \in [0,1]^d, \; |\derivativealpha(x)| \leq L \quad \text{and}\quad  |\estimftalpha (x)|\leq L.
\]
These remarks and the fact that $\supnorm{\estimftalpha - \derivativealpha}\le \supnorm{ {\tilde f}^{(\alpha)}_T- \derivativealpha}$ imply: 
\begin{align}\label{ineq:ineq_first_decomp_sup_norm_bound_proba}
    \Exp\goodbrak{\supnorm{\estimftalpha - \derivativealpha}^2}
    &\le  \Exp\goodbrak{\supnorm{\estimftalpha - \derivativealpha}^2 \mathbf{I}\{\mathcal{E}\}} + (2L)^2\mathbb{P}(\mathcal{E}^c)\notag \\
    & \leq \Exp\goodbrak{\supnorm{ {\tilde f}^{(\alpha)}_T- \derivativealpha}^2 \mathbf{I}\{\mathcal{E}\}} + C \exp\goodpar{-\frac{A_T}{c}}.
\end{align}
Set $\tilde{\mathbb{E}}[\cdot]:=\Exp[\cdot|X_1,\dots,X_T]$ to simplify the notation. We proceed by decomposing the estimation error into bias and variance terms. Since $\forall \: a,b \in \real, \: (a+b)^2\leq 2a^2 + 2b^2$, we obtain:
\begin{equation}\label{bias-variance-decomp}
    \Exp\goodbrak{\supnorm{{\tilde f}^{(\alpha)}_T - \derivativealpha}^2 \inderound}\leq 2 \Exp \goodbrak{\supnorm{ {\tilde f}^{(\alpha)}_T - \tilde{\Exp}[{\tilde f}^{(\alpha)}_T] }^2\inderound} + 2 \Exp \goodbrak{\supnorm{ \tilde{\Exp}[{\tilde f}^{(\alpha)}_T] - \derivativealpha }^2\inderound}.
\end{equation}

{\it Control of the bias.}
We start by considering the squared bias term $\Exp \goodbrak{\supnorm{ \tilde{\Exp}[{\tilde f}^{(\alpha)}_T] - \derivativealpha }^2\inderound}$. On the event $\mathcal{E}$ the matrix $\roundbtx$ is invertible and  using Lemma \ref{lem:def_LPE} we have 
\[
\forall \; x \in [0,1]^d, \quad {\tilde f}^{(\alpha)}_T(x) = \sum_{i=1}^T Y_i \weights.
\]
Since the noise is conditionally sub-Gaussian, it is conditionally zero mean, so that 
\begin{equation}\label{eq:ub-0}
\tilde{\Exp}\goodbrak{{\tilde f}^{(\alpha)}_T(x)} = \tilde{\Exp}\goodbrak{\sum_{i=1}^T \goodpar{f(X_i) + \xi_i}\weights} = \sum_{i=1}^T f(X_i) \weights.
\end{equation}
Therefore,
\begin{equation}\label{ineq:bound_bias_from_above}
\tilde{\Exp}[{\tilde f}^{(\alpha)}_T(x)] - f\exponentalpha(x) = \sum_{i=1}^T f(X_i)\weights - f\exponentalpha(x).
\end{equation}
To pursue the proof, we need the following lemma.

\begin{Lemma}\label{lem:reproduction_poly}
Let $x\in\real^d$ be such that $\roundbtx \succ 0$. Let $Q$ be a polynomial of degree smaller than or equal to $\ell$. Let $\alpha=(\alpha_1,\dots,\alpha_d)$ be a multi-index such that $\lonenorm{\alpha}\leq \ell$. Then the weights $\weights, \; i \in \{1\dots,T\}$, are such that 
\begin{equation*}
    \sum_{i=1}^T Q(X_i)\weights = Q^{(\alpha)}(x).
\end{equation*}
    In particular, for every multi-index $s \in \mathbb{N}^d$ such that $\|s\|_1 \le \ell$, we have
    \begin{equation}\label{eq:lemma_reproduction_poly_part}
         \quad \sum_{i=1}^T (X_i - x)^s \weights = \alpha! \mathbf{I}\{s=\alpha\}.
    \end{equation}
\end{Lemma}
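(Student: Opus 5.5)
Our plan is to prove the polynomial reproduction property by expanding $Q$ in the scaled monomial basis centred at $x$, and then using the normal equations encoded in $\roundbtx$. Since $Q$ has degree at most $\ell$, Taylor's formula at $x$ is exact:
\[
Q(X_i) = \sum_{\lonenorm{s}\le \ell} \frac{Q^{(s)}(x)}{s!}(X_i-x)^s = \theta_Q^{\top} U\goodpar{\frac{X_i-x}{h}},
\]
where $\theta_Q := \goodpar{h^{\lonenorm{s}} Q^{(s)}(x)}_{\lonenorm{s}\le\ell}\in\real^{D}$. We use the fact that $U((X_i-x)/h)$ has components $h^{-\lonenorm{s}}(X_i-x)^s/s!$.

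Next we substitute this into the weights of Lemma~\ref{lem:def_LPE}:
\begin{align*}
\sum_{i=1}^T Q(X_i)\weights &= \frac{U\exponentalpha(0)^{\top}}{h^{k}}\roundbtx^{-1}\goodpar{\frac{1}{Th^d}\sum_{i=1}^T U\goodpar{\frac{X_i-x}{h}}U\goodpar{\frac{X_i-x}{h}}^{\top}K\goodpar{\frac{X_i-x}{h}}}\theta_Q \\
&= h^{-k}U\exponentalpha(0)^{\top}\theta_Q .
\end{align*}
Here invertibility comes from the hypothesis $\roundbtx\succ 0$, and the bracket is exactly $\roundbtx$, which cancels.

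It remains to evaluate $U\exponentalpha(0)$ using \eqref{eq:def_deriv_u}. The component indexed by $s$ is $x^{s-\alpha}/(s-\alpha)!$ evaluated at $0$ when $s\ge\alpha$ componentwise, and zero otherwise. At $0$ this is nonzero only if $s=\alpha$, in which case it equals $1$; this uses the convention $0^0=1$ and $k=\lonenorm{\alpha}\le\ell$, so that $\alpha$ is among the indices. Hence $U\exponentalpha(0)^{\top}\theta_Q = h^{k}Q^{(\alpha)}(x)$, and the factors $h^{\pm k}$ cancel, giving $Q^{(\alpha)}(x)$.

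For the particular case \eqref{eq:lemma_reproduction_poly_part}, we take $Q(y)=(y-x)^s$ with $\lonenorm{s}\le\ell$. Then $Q^{(\alpha)}(x)=D^\alpha (y-x)^s|_{y=x}$, which is $\alpha!$ if $s=\alpha$ and $0$ otherwise, since any surviving power $(y-x)^{s-\alpha}$ with $s\neq\alpha$ vanishes at $y=x$. The only delicate point is bookkeeping of the $h$-scalings and the $s!$ normalisations, which the exact Taylor identity handles; there is no real obstacle.
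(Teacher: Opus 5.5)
Your proof is correct and takes essentially the same route as the paper: the exact Taylor expansion $Q(X_i)=\theta_Q^{\top}U((X_i-x)/h)$ in the rescaled monomial basis, followed by cancellation against $\mathcal{B}_T(x)$ and reading off the $\alpha$-coordinate through $U^{(\alpha)}(0)$. The only difference is that you cancel $\mathcal{B}_T(x)^{-1}\mathcal{B}_T(x)$ directly in the closed-form weights. The paper instead argues that $q(x)$ is the unique minimizer of the weighted least-squares problem and then invokes the weight representation of Lemma~\ref{lem:def_LPE}, so your version is slightly more self-contained.
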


\begin{proof}
For clarity and legibility of the proof, we recall the definition of $U(\cdot)$ with multi-indices of maximal $l_1$-norm equal to $\ell$: 
\[
\forall \; x\in \real^d, \quad U(x):= \goodpar{\frac{x^s}{s!}}_{\lonenorm{s}\leq \ell}.
\]
The ordering of coordinates here does not matter, as long as it is kept the same throughout the computations. In fact, the only coordinate that has to remain in the same place is the coordinate of the multi-index $\alpha$. We consider the particular case $Y_i = Q(X_i)$ for all $i$, where $Q$ is a polynomial of degree smaller than or equal to $\ell$. To obtain the LPE($\ell$) solution we need to find the vector 
\begin{align}\label{eq:def_lpe_direct_obs_app}
    \hat{\theta}_T(x) &\in \argmin_{\theta\in\real^D} \sum_{i=1}^T\goodpar{Q(X_i) - \theta^{\top}U\goodpar{\frac{X_i - x}{h}}}^2 K\goodpar{\frac{X_i - x}{h}}.
\end{align}
Now, for any polynomial $Q$ of degree not exceeding $\ell$, a Taylor expansion yields 
\[
Q(X_i) = \sum_{\lonenorm{s}\leq \ell}\frac{Q^{(s)}(x)}{s!}(X_i - x)^s = q(x)^{\top}U\goodpar{\frac{X_i - x}{h}},
\]
where we set 
\[
q(x) := (Q(x), Q^{(s_1)}(x)h, \dots, Q^{(s_D)}(x)h^{\ell}), 
\]
and the multi-index $s_D$ is such that $\lonenorm{s_D}=\ell$. We can rewrite (\ref{eq:def_lpe_direct_obs_app}) as follows:
\[
    \hat{\theta}_T(x) \in \argmin_{\theta\in\real^D}\sum_{i=1}^T\goodpar{(q(x)-\theta)^{\top}U\goodpar{\frac{X_i - x}{h}}}^2 K\goodpar{\frac{X_i - x}{h}}
\]
or, equivalently,
\[
    \hat{\theta}_T(x) \in \argmin_{\theta\in\real^D} (q(x)-\theta)^{\top}\roundbtx(q(x)-\theta),
\]
where $\roundbtx$ is the matrix defined in (\ref{eq:def_roundbtx}). Since $\roundbtx$ is positive definite, we have that the unique solution is $\hat{\theta}_T(x) = q(x)$. By the definition of ${\rm LPE}(\ell)$,
\begin{equation*}
    {\tilde f}^{(\alpha)}_T(x) = \hat{\theta}_T(x)^{\top}U\exponentalpha(0)h^{-k}= q(x)^{\top}U\exponentalpha(0)h^{-k} = Q\exponentalpha(x).
\end{equation*}
Since we can write ${\tilde f}^{(\alpha)}_T(x)$ as in (\ref{def:estimfalpha_w_weights}) we have 
\begin{equation}\label{eq:reproduction_poly_deriv_alpha_appendix}
    \sum_{i=1}^T Q(X_i)\weights = Q\exponentalpha(x).
\end{equation}
To show (\ref{eq:lemma_reproduction_poly_part}) let $Q=Q_s$ be defined as : $\forall \;z\in\real^d, \;Q_s(z)=(z-x)^s$, for $s\in\mathbb{N}^d$. In this case,
\begin{equation}\label{eq:def_deriv_poly}
    Q_s\exponentalpha(z) =\begin{cases}
        \displaystyle(z-x)^{s-\alpha}\prod_{i=1}^d \frac{s_i!}{(s_i -\alpha_i)!}, \; \text{if} \; s_i\geq\alpha_i \; \forall i \in \{1,\dots,d\} \\
        0 \quad \text{otherwise}.
    \end{cases}
\end{equation}
In particular, $Q_{\alpha}\exponentalpha(x) = \alpha!$. This and (\ref{eq:reproduction_poly_deriv_alpha_appendix}) yield~\eqref{eq:lemma_reproduction_poly_part}.
\end{proof}
For any $i\in\{1,\dots,T\}$, the Taylor expansion of $f$ of order $\ell$ around point $x$ yields 
\[
    f(X_i) = \sum_{\lonenorm{s}<\ell}\frac{f^{(s)}(x)}{s!}(X_i - x)^s + \sum_{\lonenorm{s} = \ell}\frac{f^{(s)}\goodpar{x+\tau_{i}(X_i - x)}}{s!}(X_i - x)^s,
\]
with some $\tau_{i} \in [0,1]$. 
Multiplying by $\weights$ and summing up over $i$ we obtain 
\begin{align*}
    \sum_{i=1}^T f(X_i)\weights &=\underbrace{\sum_{i=1}^T \sum_{\lonenorm{s}<\ell}\frac{f^{(s)}(x)}{s!}(X_i - x)^s \weights}_{=:\mathtt{S_1}}  \\
    &+ \underbrace{\sum_{i=1}^T\sum_{\lonenorm{s} = \ell}\frac{f^{(s)}\goodpar{x+\tau_{i}(X_i - x)}}{s!}(X_i - x)^s\weights}_{=:\mathtt{S_2}}.
\end{align*}
We consider separately two cases, first $\lonenorm{\alpha} < \ell$ and then $\lonenorm{\alpha}=\ell$. For $\lonenorm{\alpha} < \ell$, applying Lemma~\ref{lem:reproduction_poly} we get
\begin{equation*}
    \mathtt{S_1} = \sum_{\substack{\lonenorm{s}<\ell \\ s\neq \alpha}}\frac{f^{(s)}(x)}{s!}\underbrace{\sum_{i=1}^T (X_i - x)^s \weights}_{=0 \; \text{by (\ref{eq:lemma_reproduction_poly_part})} } + \frac{f^{(\alpha)}(x)}{\alpha!}\underbrace{\sum_{i=1}^T (X_i - x)^{\alpha}\weights}_{=\alpha! \; \text{by (\ref{eq:lemma_reproduction_poly_part})}} = f\exponentalpha(x).
\end{equation*}
Furthermore, by Lemma \ref{lem:reproduction_poly} we have $\sum_{i=1}^T \weights(X_i - x)^s =0$ if $\lonenorm{s}=\ell$ and $\lonenorm{\alpha} < \ell$, so that the following equalities hold:
\begin{align*}
    \mathtt{S_2} &= \sum_{\lonenorm{s}=\ell}\sum_{i=1}^T \frac{f^{(s)}(x+\tau_i(X_i - x))}{s!}(X_i - x)^s\weights - \sum_{\lonenorm{s}=\ell}\sum_{i=1}^T \frac{f^{(s)}(x)}{s!}(X_i - x)^s \weights \\
    &= \sum_{i=1}^T \weights\sum_{\lonenorm{s}=\ell}\frac{f^{(s)}(x+\tau_i(X_i - x)) - f^{(s)}(x)}{s!}(X_i - x)^s=: \mathtt{S_3}.
\end{align*}
Next note that, for all $f\in \Sigma(\beta,L)$ and all $c\in [0,1]$, $x,y\in [0,1]^d$ we have (cf., for example, inequality (18) in \cite{chhor2024benign}):
\begin{equation*}
\left| \sum_{\lonenorm{s}=\ell}\frac{1}{s!}\goodbrak{f^{(s)}(y+c(x-y))-f^{(s)}(y)}(x-y)^s\right| \leq C_{b} \euclideannorm{x-y}^{\beta},
\end{equation*}
where $C_{b}>0$ is a constant.
Combining this remark with the above arguments we get that for $\alpha$ such that $\lonenorm{\alpha}<\ell$ the following holds:
\begin{equation}\label{ineq:taylor_ineq_chhor}
    \left\vert \sum_{i=1}^Tf(X_i)\weights - f\exponentalpha(x)\right\vert = \vert \mathtt{S_3}\vert \leq C_{b} \sum_{i=1}^T \left\Vert X_i-x\right\Vert_2^{\beta}|\weights|.
\end{equation}
The argument is slightly different for $\lonenorm{\alpha}=\ell$. In this case, Lemma \ref{lem:reproduction_poly} yields $\mathtt{S_1}=0$ while 
\begin{align*}
    \sum_{\lonenorm{s}=\ell}\sum_{i=1}^T \frac{f^{(s)}(x)}{s!}(X_i - x)^s \weights =& \sum_{\substack{\lonenorm{s}=\ell\\ s \neq \alpha}}\sum_{i=1}^T \frac{f^{(s)}(x)}{s!}(X_i - x)^s \weights \\
    &+ \sum_{i=1}^T \frac{f^{(\alpha)}(x)}{\alpha!}(X_i - x)^{\alpha}\weights \\
    =& f\exponentalpha(x).
\end{align*}
Therefore, we have $ \left\vert \sum_{i=1}^Tf(X_i)\weights - f\exponentalpha(x)\right\vert=|\mathtt{S_2} - f\exponentalpha(x)|=|\mathtt{S_3}|$, so that we can again use (\ref{ineq:taylor_ineq_chhor}). Hence, in both cases for any $x\in[0,1]^d$ the expression in (\ref{ineq:bound_bias_from_above}) is bounded as follows:
\begin{equation}\label{eq:ub-1}
    \left\vert\tilde{\Exp}\goodbrak{{\tilde f}^{(\alpha)}_T(x)} - f\exponentalpha(x)\right\vert \leq C_{b}\sum_{i=1}^T  \euclideannorm{X_i-x}^{\beta} \vert \weights\vert.
\end{equation}
Recall now that on the event $\mathcal{E}$ we have $\left\Vert \roundbtx^{-1} v\right\Vert_2 \leq \euclideannorm{v} /\lambda_0$ for all $v \in \mathbb{R}^{\cardinality{\ell}{d}}$, where $\lambda_0$ is the constant from Lemma \ref{lem:lemma_proba_eigen}. In addition, notice that $\Vert U^{(\alpha)}(0)\Vert_2 = 1$ (as it is the vector containing a single $1$ in the coordinate associated to $\alpha$ and all other entries 0). It follows that, on the event~$\mathcal{E}$, 
\begin{align*}
    |\weights| &\leq \frac{1}{Th^{d+k}}\left\Vert \roundbtx^{-1}U\goodpar{\frac{X_i - x}{h}}K\goodpar{\frac{X_i-x}{h}}\right\Vert_2 \\
    &\leq \frac{1}{Th^{d+k} \lambda_0}\left\Vert U\goodpar{\frac{X_i-x}{h}}\right\Vert_2 K\goodpar{\frac{X_i-x}{h}} \\
    & \leq \frac{1}{Th^{d+k}\lambda_0}K\goodpar{\frac{X_i - x}{h}}\sqrt{\sum_{0\leq\lonenorm{s}\leq \ell} \goodpar{\frac{C_K}{(s!)^2}}} \quad (\text{since Supp($K$) is compact}) \\
    &\leq \frac{\sqrt{C_K\cardinality{\ell}{d}}}{Th^{d+k}\lambda_0}K\goodpar{\frac{X_i-x}{h}},
\end{align*}
where $C_K>0$ is a constant depending on the support of $K(\cdot)$ and the dimension.
By substituting this bound in \eqref{eq:ub-1} and using the fact that $K$ is compactly supported we find that
\begin{equation}\label{eq:ub-2}
\Exp\goodbrak{\supnorm{ \tilde{\Exp}[{\tilde f}^{(\alpha)}_T] - \derivativealpha }^2\inderound}
     \leq C h^{2(\beta-k)} \Exp \goodbrak{\sup_{x \in [0,1]^d} \left\vert\frac{1}{Th^d}\sum_{i=1}^T V_i(x) \right\vert^2},
\end{equation}
where $C>0$ is a constant, and we introduced the notation
$$
V_i(x):= K\goodpar{\frac{X_i-x}{h}}.
$$
We now prove that the expectation on the right hand side of \eqref{eq:ub-2} is bounded by a constant. We use an $\epsilon$-net argument. We cover $[0,1]^d$ by $M$ equal cubes with edge length $\epsilon=T^{-b}$, where $b>0$ will be chosen later. The centers $x_1,\dots,x_M$ of the cubes constitute an $\epsilon$-net. The cardinality $M$ of this net is at most $(\lfloor1/\epsilon\rfloor+1)^d$. Since $\epsilon<1$ we get $M\le (2/\epsilon)^d=(2T^b)^d$. We have 
\begin{align} \nonumber
\sup_{x \in [0,1]^d} \Big\vert\sum_{i=1}^T V_i(x) \Big\vert^2
     & \leq \left(\max_{1\leq j \leq M} \Big\vert \sum_{i=1}^T V_i(x_j) \Big\vert  + \sup_{x,x' : \Vert x-x'\Vert_{\infty} \leq \epsilon} \Big\vert \sum_{i=1}^T (V_i(x')- V_i(x))  \Big\vert\right)^2
     \\ \label{eq:ub-3}
     & \le 2 \max_{1\leq j \leq M} \Big\vert \sum_{i=1}^T V_i(x_j) \Big\vert^2 +
     2 L_K^2h^{-2}\epsilon^2 T^2,
\end{align} 
where we have used the fact that $V_i$'s satisfy the Lipschitz condition, cf. Assumption 3. Note that, for $i=1,\dots,T$, $j=1,\dots,M$,
\begin{equation}\label{eq:ub-4}
    \Exp[V_i(x_j)] \leq h^d \int K(u) du, \quad 
    \Exp\goodbrak{V_i(x_j)^2} \leq h^d \int K^2(u)du.
\end{equation}
Using the first inequality in \eqref{eq:ub-4} and the notation $\zeta_{i,j}=V_i(x_j)-\Exp[V_i(x_j)]$ we obtain
\begin{equation}\label{eq:ub-5}
    \max_{1\leq j \leq M} \Big\vert \sum_{i=1}^T V_i(x_j) \Big\vert^2 
    \le C
     \Big(\max_{1\leq j \leq M} \Big\vert \sum_{i=1}^T \zeta_{i,j}  \Big\vert^2 + (Th^d)^2\Big).
\end{equation}
For each fixed $j$, $\zeta_{i,j}$'s are zero mean independent random variables bounded in absolute value by $2K_{\max}$ and having variance bounded by $h^d \int K^2(u)du$ due to \eqref{eq:ub-4}. Thus, by Bernstein's inequality (see its generalized version in Lemma~\ref{lemma:lemma_8_rhode_tsybakov}) we have
\begin{equation*}
\mathbb{P}\goodpar{\left\vert\sum_{i=1}^T\zeta_{i,j}\right\vert \geq x} \leq 2\exp\goodpar{-{\tilde c} \frac{x^2}{Th^d + x}}, \quad \forall \; x >0,
\end{equation*}
where ${\tilde c}>0$ is a constant. It follows that for the normalized sums ${\bar \eta}_j:=\frac{1}{\sqrt{Th^d}}\sum_{i=1}^T\zeta_{i,j}$ we have
\begin{equation}\label{eq:ub-7a}
\mathbb{P}(|{\bar \eta}_{j}|>x) \le 2 \exp(-{\tilde c}x^2/2)\mathbf{I}\{x\le \sqrt{Th^d}\} + 
2 \exp(-{\tilde c}x\sqrt{Th^d}/2)\mathbf{I}\{x>\sqrt{Th^d}\}    
\end{equation}
for all $x>0, j=1,\dots,M$. This inequality and the union bound imply that, for any $M\ge 2$ and a constant $C>0$ large enough, 
\begin{align}\label{eq:ub-6}
    \Exp\goodbrak{\max_{1\leq j \leq M}|{\bar \eta}_{j}|^2} &= \int_0^{\infty} \mathbb{P}\goodpar{\max_{1\leq j \leq M}|{\bar \eta}_{j}|^2  >t}dt \\
    \nonumber
    &\leq C\log(M) +  \int_{C\log(M)}^{\infty} \mathbb{P}(\max_{1\leq j \leq M}|{\bar \eta}_{j}|>\sqrt{t})dt\\
    \nonumber
    & \le C\log(M) + \int_{C\log(M)}^{\infty} \sum_{j=1}^M\mathbb{P}(|{\bar \eta}_{j}|>\sqrt{t})dt\\
    \nonumber
    &\leq C\log(M) +  2M \int_{C\log(M)}^{x_0}\exp\goodpar{-{\tilde c}t/2}dt + 2M \int_{x_0}^{\infty}\exp\goodpar{-{\tilde c}\sqrt{tTh^d}/2}dt\\
    \nonumber
    &\leq C\log(M) +  2M \int_{C\log(M)}^{\infty}\exp\goodpar{-{\tilde c}t/2}dt + 2M \int_{Th^d}^{\infty}\exp\goodpar{-{\tilde c}\sqrt{tTh^d}/2}dt,
\end{align}
where $x_0=\max(C\log(M),Th^d)$.
By choosing here $C>0$ large enough we obtain that the first integral in the last line is bounded by a constant independent of $M$. Next,  with $c_0 = \tilde c/ 2$ and $y=Th^d$, since
$$\int_y^\infty \exp({-c_0\sqrt{yt}})\,dt
= \frac{2}{c_0^2 y}\int_{c_0y}^\infty u \exp({-u})\,du
= \frac{2}{c_0^2 y}(c_0y+1)\exp({-c_0y})
\leq c \exp({-c_0y}),
$$
for some constant $c>0$, and recalling that $M\le (2T^b)^d$, $h = a\goodpar{\frac{\log(T)}{T}}^{\fracbeta{1}}$, for the second integral we have
$$
M\int_{Th^d}^{\infty}\exp\goodpar{-{\tilde c}\sqrt{tTh^d}/2}dt \le  cM \exp(-Th^d c_0 )
\le c (2T^b)^d \exp\big(-c_0a^dT^{2\beta/(2\beta+d)}\big),
$$
which is also bounded by a constant independent of $M$.  
Putting these remarks together we conclude that there exists a constant $C'>0$ such that
\begin{equation}\label{eq:ub-7}
\Exp\goodbrak{\max_{1\leq j \leq M} \vert{\bar \eta}_j \vert^2} \leq C' \log(M).
\end{equation}
Combining \eqref{eq:ub-3}, \eqref{eq:ub-5} and \eqref{eq:ub-7} we obtain
\begin{align}
\label{eq:ub-8}
\Exp\goodbrak{\sup_{x \in [0,1]^d} \Big\vert\sum_{i=1}^T V_i(x) \Big\vert^2}
     & \le C \Big( \log(M) Th^d + (Th^d)^2+
     h^{-2}\epsilon^2 T^2\Big)
\end{align} 
for some constant $C>0$. Here, $M\le (2T^b)^d$, $h = a\goodpar{\frac{\log(T)}{T}}^{\fracbeta{1}}$. Choosing $b =\fracbeta{\beta+d+1}$ (so that $\epsilon = T^{-\fracbeta{\beta+d+1}}$) we obtain that the expression in \eqref{eq:ub-8} does not exceed $(Th^d)^2$ to within a constant factor. Using this fact in \eqref{eq:ub-2} we obtain that there exists $C^{+}>0$ such that
\begin{equation}\label{bias-final}
   \Exp\goodbrak{\supnorm{ \tilde{\Exp}[{\tilde f}^{(\alpha)}_T] - \derivativealpha }^2\inderound} \leq C^+h^{2\beta-2k}.
\end{equation}

{\it Control of the variance.} 
Consider now bounding from above the variance term on the event $\mathcal{E}$. Since 
\[
\tilde{\Exp} [{\tilde f}^{(\alpha)}_T (x)] = \sum_{i=1}^T f(X_i)\weights
\]
we have
\[
\Exp \goodbrak{\left\Vert{\tilde f}^{(\alpha)}_T - \tilde{\Exp}[{\tilde f}^{(\alpha)}_T (x)]\right\Vert^2_\infty \inderound} = \Exp\goodbrak{\sup_{x\in[0,1]^d}\left\vert\sum_{i=1}^T \xi_i \weights\right\vert^2 \inderound}.
\]
For $ i \in \{1,\dots,T\}, x \in [0,1]^d$, introduce the notation 
\[
S_i(x) := U\goodpar{\frac{X_i - x}{h}}K\goodpar{\frac{X_i-x}{h}}\in\real^{\cardinality{\ell}{d}}.
\]
Then we have
\begin{align*}
    \left|\sum_{i=1}^T \xi_i \weights \right| &= \left| \sum_{i=1}^T \xi_i \frac{1}{Th^{d+k}}U^{(\alpha)}(0)^{\top} \roundbtx^{-1} S_i(x) \right|
    \\
    &\leq \left\Vert \roundbtx^{-1}\sum_{i=1}^T \frac{1}{Th^{d+k}}\xi_i S_i(x)\right\Vert_2 
    \\
    &\leq \frac{1}{\lambda_0Th^{d+k}}\left\Vert \sum_{i=1}^T \xi_i S_i(x) \right\Vert_2, \notag 
\end{align*}
where we used the Cauchy-Schwarz inequality and the fact that $\Vert U^{(\alpha)}(0)\Vert_2 = 1$. 
It follows that
\[
A := \sup_{x\in [0,1]^d} \left|\sum_{i=1}^T \xi_i \weights \right| \inderound \leq \frac{1}{\lambda_0Th^{d+k}} \sup_{x\in[0,1]^d}\left\Vert \sum_{i=1}^T \xi_i S_i(x) \right\Vert_2.\notag
\]
 To control the last supremum we use again an $\epsilon$-net argument. We consider the $\epsilon$-net composed of the same points $x_1,\dots,x_M$ as defined above. We have
\begin{equation}\label{ineq:first_decomp_a_square}
    A^2 \leq \goodpar{\frac{1}{\lambda_0 Th^{d+k}}}^2 \left(\max_{1\leq j \leq M} \left\Vert \sum_{i=1}^T \xi_i S_i(x_j) \right\Vert_2  + \sup_{x,x' : \Vert x-x'\Vert_{\infty} \leq \epsilon} \left\Vert \sum_{i=1}^T \xi_i (S_i(x')- S_i(x))  \right\Vert_2\right)^2.
\end{equation} 
We first evaluate the second term on the right hand side of this inequality. For any $x,x' \in [0,1]^d$, 
\[
    \sum_{i=1}^T \xi_i(S_i(x)-S_i(x')) = \sum_{i=1}^T
    \xi_i \goodpar{U\goodpar{\frac{X_i-x}{h}}K\goodpar{\frac{X_i-x}{h}} - U\goodpar{\frac{X_i-x'}{h}}K\goodpar{\frac{X_i-x'}{h}}}.
\]
We will use the following lemma. 
\begin{Lemma}\label{lem:lip_ku}
    There exists a constant $\overline{L}$ depending only on $K_{max}, L_K, d, \ell$ such that $U(\cdot)K(\cdot)$ is $\overline{L}$-Lipschitz, i.e., $\forall \; u,u' \in \real^d$ : 
    \[
    \euclideannorm{ U(u)K(u) - U(u')K(u')}\leq \overline{L}\euclideannorm{u-u'}
    \]
\end{Lemma}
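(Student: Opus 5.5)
The plan is to write $U(u)K(u)-U(u')K(u')=\big(U(u)-U(u')\big)K(u)+U(u')\big(K(u)-K(u')\big)$ and to use the compact support of $K$ to localize everything. Let $\mathrm{Supp}(K)\subset R\,\mathbb{B}_d$ for some $R>0$. We distinguish three cases according to whether $u$ and $u'$ lie in $R\,\mathbb{B}_d$.

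\emph{Case 1: $u,u'\notin R\,\mathbb{B}_d$.} Both products vanish, and the inequality is trivial.

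\emph{Case 2: $u,u'\in R\,\mathbb{B}_d$.} We use the splitting above.
\begin{itemize}
\item Each coordinate $u^s/s!$ of $U$, with $\lonenorm{s}\le\ell$, is a polynomial. On the convex compact set $R\,\mathbb{B}_d$ its gradient is bounded by a constant depending only on $R,d,\ell$. By the mean value theorem along the segment $[u,u']\subset R\,\mathbb{B}_d$, each coordinate is Lipschitz there.
\item Summing over the $\cardinality{\ell}{d}$ coordinates gives $\euclideannorm{U(u)-U(u')}\le C_U\euclideannorm{u-u'}$.
\item Since $|K(u)|\le K_{\max}$, the first term is bounded by $K_{\max}C_U\euclideannorm{u-u'}$.
\item Set $M_U:=\sup_{v\in R\mathbb{B}_d}\euclideannorm{U(v)}$, which is finite since $U$ is continuous on a compact set. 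By Assumption 3, the second term is bounded by $M_U L_K\euclideannorm{u-u'}$.
\end{itemize}

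\emph{Case 3: exactly one point, say $u$, lies in $R\,\mathbb{B}_d$ and $u'\notin R\,\mathbb{B}_d$.} Here $K(u')=0$, so the quantity to bound is $\euclideannorm{U(u)}\,|K(u)|$. Since $u'$ lies outside the support of $K$ and $K$ is continuous, $|K(u)|=|K(u)-K(u')|\le L_K\euclideannorm{u-u'}$. Together with $\euclideannorm{U(u)}\le M_U$, this gives the bound $M_UL_K\euclideannorm{u-u'}$. Thus the possibly unbounded factor $U(u')$ never enters this case.

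Taking $\overline{L}:=K_{\max}C_U+M_UL_K$ covers all three cases. The constant depends on $K_{\max}$, $L_K$, $d$, $\ell$ and the support radius $R$, which the paper treats as fixed with the kernel.

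The only delicate point is Case 3, since $U$ is unbounded on $\real^d$. It is handled by applying the asymmetric splitting so that the $U$-factor is always evaluated at the point inside the support.
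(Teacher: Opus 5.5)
Your proof is correct and follows essentially the same route as the paper. Both arguments use the same case split according to the support of $K$ and the same add-and-subtract decomposition when both points are in the support. In the mixed case both write $|K(u)|=|K(u)-K(u')|$, so that $U$ is only ever evaluated on a bounded set.

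The differences are cosmetic. You bound the vector norm directly rather than coordinatewise, and you replace $\mathrm{Supp}(K)$ by a containing ball $R\,\mathbb{B}_d$, which makes the mean value step along $[u,u']$ immediate. You also correctly note that the constant depends on $R$ as well.
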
 
\begin{proof}
To prove the lemma, we establish a Lipschitz condition on $U(\cdot)K(\cdot)$ coordinate-wise. We show that $$\forall \; u, v \in \real^d, \; \forall \; j \in \{1,\dots,\cardinality{\ell}{d}\}, \; |U_j(u)K(u) - U_j(v)K(v)|\leq L_{UK_j} \euclideannorm{u-v},$$ where $ L_{UK_j}$ is a Lipschitz constant. Then, the constant $\overline{L}$ in the lemma is the sum of the squared Lipschitz constants of each coordinate. To show the relation coordinate-wise, we consider two cases. In the first case, both $u$ and $v$ belong to the support of $K$ (denoted Supp($K$)). In the second, at least one of $u$ or $v$ do not belong to Supp($K$). For $u,v\in\text{Supp}(K)$, we start by noticing that $|U_j(u)|\leq U_{\text{max}}<\infty$ by the definition of $U(\cdot)$ and the fact that we consider points on a compact set of $\real^d$. We also have that 
\[
|U_j(u)K(u) - U_j(v)K(v)|=\left|U_j(u)\goodpar{K(u)-K(v)} - K(v)(U_j(v)-U_j(u))\right|. 
\]
As $u,v\in\text{Supp}(K)$, $U_j$ is Lipschitz as a polynomial on a compact support. Let $L_{U_j}$ denote its Lipschitz constant. Also, $K$ is $L_K$-Lipschitz. Using the triangle inequality, and the uniform bounds on $U_j$ and $K$ we get the following upper bound :
\[
|U_j(u)K(u) - U_j(v)K(v)| \leq (K_{\text{max}}L_{U_j}+U_{\text{max}}L_K)\euclideannorm{u-v}.
\]
Now, for the second case, at least one of $u$ or $v$ is outside $\text{Supp}(K)$. If none of them are in the support of $K$, then $U_j(u)K(u) - U_j(v)K(v) = 0$. If only one of them (let it be $u$) is in the support of $K$, we have 
\[
|U_j(u)K(u)-U_j(v)K(v)| = |U_j(u)K(u)|= |U_j(u)(K(u)-K(v))| \leq U_{\text{max}}L_K\euclideannorm{u-v}.
\]
Hence, for any coordinate $j$, we have that $(U(\cdot)K(\cdot))_j$ is at most $(K_{\text{max}}L_{U_j}+U_{\text{max}}L_K)$-Lipschitz.
It follows that $U(\cdot)K(\cdot)$ is $\overline{L}$-Lipschitz with $\overline{L} =( \sum_{j=1}^{\cardinality{\ell}{d}}(K_{\text{max}}L_{U_j}+U_{\text{max}}L_K)^2)^{1/2}$.  
\end{proof}
Applying Lemma \ref{lem:lip_ku} yields, with $\overline{L}'=\overline{L}^2 d$,
\begin{align}
\goodpar{\frac{1}{\lambda_0Th^{d+k}}}^2 \sup_{x,x' : \Vert x-x'\Vert_{\infty} \leq \epsilon} \left\Vert \sum_{i=1}^T \xi_i (S_i(x')- S_i(x))  \right\Vert_2^2 
& \leq \frac{\epsilon^2\overline{L}'}{\lambda_0^2 T^2 h^{2d+2k+2}}\goodpar{\sum_{i=1}^T|\xi_i|}^2.
\label{ineq:bound_on_second_term_regularity_variance}
\end{align}
Taking expectations in (\ref{ineq:first_decomp_a_square}), using the inequality $(a+b)^2\leq2a^2 + 2b^2$, \eqref{ineq:bound_on_second_term_regularity_variance} and Jensen's inequality we obtain:
\begin{equation}\label{eq:def_eta_j}
    \Exp[A^2] \leq \frac{2}{\lambda_0^2 T h^{d+2k}} \Exp\goodbrak{\max_{1\leq j \leq M} \Vert \eta_j \Vert^2} + \frac{2\epsilon^2\overline{L}'\sigma_{\xi}^2}{\lambda_0^2h^{2d+2k+2}}
\end{equation}
with the random vectors
\begin{equation*}
    \eta_j := \frac{1}{\sqrt{Th^d}}\sum_{i=1}^T \xi_i S_i(x_j).
\end{equation*}
It remains now to control $\Exp[\max_{1\leq j\leq M} \Vert\eta_j\Vert_2^2]$. To this end, we use the following vector version of Bernstein's inequality (see \cite{pinelis1986largedev} or Lemma 8 in \cite{rohde2011estimation}).
\begin{Lemma}\label{lemma:lemma_8_rhode_tsybakov}
Let $\zeta_1,\dots,\zeta_T$ be independent zero mean random vectors such that  
\begin{equation}\label{eq:condition_lemma_rhode_tsyb}
    \sum_{i=1}^T \Exp\goodbrak{\Vert \zeta_i \Vert_2^l} \leq \frac{1}{2}l! B^2 H^{l-2}, \quad l=2,3,\dots
\end{equation}
with some finite constants $B,H>0$. Then, 
\begin{equation*}
\mathbb{P}\goodpar{\left\Vert\sum_{i=1}^T\zeta_i\right\Vert_2 \geq x} \leq 2\exp\goodpar{-\frac{x^2}{2B^2 + 2xH}}, \quad \forall \; x >0.
\end{equation*}
\end{Lemma}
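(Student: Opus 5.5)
We prove Lemma~\ref{lemma:lemma_8_rhode_tsybakov}, a Hilbert-space (Euclidean) Bernstein inequality. We follow Pinelis' martingale argument, with a smoothed norm as the Lyapunov function. Set $S_j=\sum_{i\le j}\zeta_i$ and $\mathcal{F}_j=\sigma(\zeta_1,\dots,\zeta_j)$. For $\lambda>0$ consider $\Phi(v)=\cosh(\lambda\Vert v\Vert_2)$. The idea is to show that
\[
\mathbb{E}\big[\cosh(\lambda\Vert S_T\Vert_2)\big]\le \prod_{i=1}^T\big(1+\mathbb{E}[e^{\lambda\Vert\zeta_i\Vert_2}-1-\lambda\Vert\zeta_i\Vert_2]\big).
\]
The tail bound then follows by Markov's inequality, since $\mathbb{P}(\Vert S_T\Vert_2\ge x)\le \mathbb{E}[\cosh(\lambda\Vert S_T\Vert_2)]/\cosh(\lambda x)$ and $1/\cosh(\lambda x)\le 2e^{-\lambda x}$.

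\textbf{Key step (one-step inequality).} For fixed $v$ and a zero-mean random vector $\zeta$ independent of $v$, we show
\[
\mathbb{E}\cosh(\lambda\Vert v+\zeta\Vert_2)\le \cosh(\lambda\Vert v\Vert_2)\,\mathbb{E}\big[e^{\lambda\Vert\zeta\Vert_2}-\lambda\Vert\zeta\Vert_2\big].
\]
Consider $g(t)=\cosh(\lambda\Vert v+t\zeta\Vert_2)$ on $[0,1]$. Using $\Vert\nabla\Vert v\Vert_2\Vert\le1$ and the fact that the Hessian of the norm is bounded by $1/\Vert v\Vert_2$ times a projection, together with $\sinh(r)/r\le\cosh(r)$, one gets the second-derivative bound
\[
g''(t)\le \lambda^2\Vert\zeta\Vert_2^2\cosh\big(\lambda\Vert v+t\zeta\Vert_2\big)\le \lambda^2\Vert\zeta\Vert_2^2\cosh(\lambda\Vert v\Vert_2)\,e^{\lambda t\Vert\zeta\Vert_2}.
\]
This is the $(2,1)$-smoothness of Hilbert space. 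Taylor expansion around $t=0$, after taking expectations, kills the linear term because $\mathbb{E}\zeta=0$ and $g'(0)$ is linear in $\zeta$. Integrating the remainder gives
\[
\mathbb{E}g(1)\le\cosh(\lambda\Vert v\Vert_2)\Big(1+\mathbb{E}\big[e^{\lambda\Vert\zeta\Vert_2}-1-\lambda\Vert\zeta\Vert_2\big]\Big).
\]
The nondifferentiability of the norm at the origin is harmless, since $\cosh(\lambda r)$ is an even smooth function of $r$. Iterating this by conditioning on $\mathcal{F}_{j-1}$ gives the product bound above.

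\textbf{Moment computation.} Expanding the exponential and using \eqref{eq:condition_lemma_rhode_tsyb}, for $\lambda H<1$,
\[
\sum_i\mathbb{E}\big[e^{\lambda\Vert\zeta_i\Vert_2}-1-\lambda\Vert\zeta_i\Vert_2\big]\le\sum_{l\ge2}\frac{\lambda^l}{2}B^2H^{l-2}=\frac{\lambda^2B^2}{2(1-\lambda H)}.
\]
Combining with $1+u\le e^u$,
\[
\mathbb{P}(\Vert S_T\Vert_2\ge x)\le 2\exp\Big(-\lambda x+\frac{\lambda^2B^2}{2(1-\lambda H)}\Big).
\]
Choosing $\lambda=x/(B^2+xH)$ gives $1-\lambda H=B^2/(B^2+xH)$. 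The exponent is then $-x^2/(B^2+xH)+x^2/(2(B^2+xH))=-x^2/(2B^2+2xH)$, which is the claim.

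The main obstacle is the one-step inequality. It requires the second-order control of $\cosh(\lambda\Vert\cdot\Vert_2)$ uniformly in $v$, including the comparison of the Hessian term $\sinh(\lambda r)/r$ with $\cosh$, and care near $v=0$. Everything else is routine. Alternatively, since the paper cites it, one may simply invoke \cite{pinelis1986largedev} (Theorem 3.3/3.4 for $(2,1)$-smooth spaces) or Lemma 8 in \cite{rohde2011estimation}.
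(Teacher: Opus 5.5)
Your proof is correct. It is essentially the argument behind the paper's treatment: the paper gives no proof of this lemma and simply invokes Pinelis's inequality (equivalently Lemma~8 of Rohde and Tsybakov), whose standard proof is your bound $\mathbb{E}\cosh(\lambda\Vert S_T\Vert_2)\le\prod_i\bigl(1+\mathbb{E}[e^{\lambda\Vert\zeta_i\Vert_2}-1-\lambda\Vert\zeta_i\Vert_2]\bigr)$ followed by summing the moment condition. The details you supply check out: the bound $g''(t)\le\lambda^2\Vert\zeta\Vert_2^2\cosh(\lambda\Vert v+t\zeta\Vert_2)$ via $\sinh(r)/r\le\cosh(r)$, the identity $\int_0^1(1-t)s^2e^{st}\,dt=e^s-1-s$, and the choice $\lambda=x/(B^2+xH)$, which gives exactly the exponent $-x^2/(2B^2+2xH)$.
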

We use this lemma with $\zeta_i= \zeta_i(j):= \frac{1}{\sqrt{Th^d}}\xi_iS_i(x_j)$. Note that $\zeta_i$'s are zero mean random vectors. We now check that assumption \eqref{eq:condition_lemma_rhode_tsyb} is satisfied and specify the corresponding values of $B$ and $H$.  Recall that $K(\cdot)$ is compactly supported and all components of $U(\cdot)$ are uniformly bounded on ${\rm Supp}(K)$, so that $\|U(u)\|_2\le U_*<\infty$ for all $u\in {\rm Supp}(K)$. Thus for any $l\geq2$,
\begin{align*}
    \Exp\goodbrak{\|S_i(x_j)\|_2^l} &= \Exp\goodbrak{\Big\Vert U\goodpar{\frac{X_i - x_j}{h}}\Big\Vert_2^l K\goodpar{\frac{X_i - x_j}{h}}^l}\leq (U_*K_{\max})^l\Exp\goodbrak{\mathbf{I}\left\{\left\Vert \frac{X_i - x_j}{h}\right\Vert_2\leq1\right\}} \\
    & \leq C(d)(U_*K_{\max})^l h^d,
\end{align*}
where $C(d)$ is the volume of $\mathbb{B}_d$, which is smaller than 6 for any $d\geq1$. 
Next, since $\xi_i$'s are conditionally sub-Gaussian we have, for all $l\ge 2$,
\begin{equation*}
    \Exp\goodbrak{|\xi_i|^l\vert X_i} \leq 2^{l}\sqrt{l!} \sigma_{\xi}^{l}
\end{equation*}
(see, e.g., Theorem 2.1 in \cite{boucheron2013concentration}). Summing over $i$ and using the last two displays yields:
\[
\sum_{i=1}^T\Exp\goodbrak{\|\xi_i S_i(x_j)\|_2^l} = \sum_{i=1}^T\Exp\goodbrak{\| S_i(x_j)\|_2^l\Exp\goodbrak{\left\vert\xi_i\right\vert^l \vert X_i}}\leq
C(d)(2 U_*K_{\max}\sigma_{\xi})^l \sqrt{l!}  \, Th^d.
\]It follows that, for all $l\ge2$,
\[ 
\sum_{i=1}^T \Exp\goodbrak{\Big\Vert \frac{1}{\sqrt{Th^d}}\xi_i S_i(x_j)\Big\Vert_2^l} \leq
\frac{l!}{2}B^2 H^{l-2}
\]
with $B^2 = 8C(d)(U_*K_{\max}\sigma_{\xi})^2$,  $H=\frac{2U_*K_{\max}\sigma_{\xi}}{\sqrt{Th^d}}$.  Recalling that $\eta_j= \sum_{i=1}^T\zeta_i(j)$ and applying Lemma~\ref{lemma:lemma_8_rhode_tsybakov} with these values of $B$ and $H$ we obtain that, for some constants $c_1,c_2>0$,
$$
\mathbb{P}(\|\eta_{j}\|_2>x) \le 2 \exp(-c_1x^2)\mathbf{I}\{x\le \sqrt{Th^d}\} + 
2 \exp(-c_2x\sqrt{Th^d})\mathbf{I}\{x>\sqrt{Th^d}\}, \quad \forall x>0, j=1,\dots,M.
$$
This inequality is analogous to \eqref{eq:ub-7a}. Therefore, using the argument as in \eqref{eq:ub-6} and further up to \eqref{eq:ub-7} with $\|\eta_{j}\|_2$ instead of $|\bar \eta_{j}|$ we conclude that
 there exists a constant $C'>0$ such that
\begin{equation}\label{ineq:bound_squared_norm_eta_j}
\Exp\goodbrak{\max_{1\leq j \leq M} \Vert\eta_j \Vert_2^2} \leq C' \log(M).
\end{equation}
Plugging  (\ref{ineq:bound_squared_norm_eta_j}) in the bound (\ref{eq:def_eta_j}) yields: 
\[
 \Exp \goodbrak{\supnorm{{\tilde f}^{(\alpha)}_T - \tilde{\Exp}[{\tilde f}^{(\alpha)}_T]}^2\inderound} \leq \frac{C''\log(M)}{Th^{d+2k}} + \frac{2\epsilon^2\overline{L}'\sigma_{\xi}^2}{\lambda_0^2h^{2d+2k+2}}.
\]
Here, $M\le (2T^b)^d$, $h = a\goodpar{\frac{\log(T)}{T}}^{\fracbeta{1}}$ and $b$ is chosen as $b =\fracbeta{\beta+d+1}$ (so that $\epsilon = T^{-\fracbeta{\beta+d+1}}$). Thus, there exists a constant $C_{1}>0$ such that
\[
\Exp \goodbrak{\supnorm{{\tilde f}^{(\alpha)}_T - \tilde{\Exp}[{\tilde f}^{(\alpha)}_T]}^2\inderound} \leq C_{1} \goodpar{\frac{\log(T)}{T}}^{\fracbeta{2\beta-2k}}.
\]
Combining this inequality with \eqref{ineq:ineq_first_decomp_sup_norm_bound_proba}, \eqref{bias-variance-decomp}, and \eqref{bias-final} proves the theorem.

\section{Proof of the lower bound (Theorem \ref{th:lower_bound})}\label{app:B}

Recall that $\lfloor \beta \rfloor=\ell$, so functions $f\in \Sigma(\beta,L)$ admit partial derivatives $f^{(s)}$ up to order $\ell$, and all derivatives $f^{(s)}$ with $\|s\|_1=\ell$ satisfy a Hölder condition. It suffices to restrict attention to a suitably chosen subclass of $\Sigma(\beta,L)$. The proof of Theorem~\ref{th:lower_bound} relies on the use of a version of Fano's inequality and taking care of the observations dependence as we consider sequential strategies.

\paragraph{Construction of the family of functions}
 Decompose $[0,1]^d$ into a partition of cubes $\{A_j\}_{j=1}^N$. The $A_j$ are disjoint cubes of side length $1/K_1$, with centers $\mu_j$, and we set 
\[
K_1 := \left( \frac{T}{\log(T)}\right)^{\fracbeta{1}}.
\]
For notational convenience and without loss of generality, we assume that $N = K_1^d$ is an integer (replacing $K_1^d$ by either $\lfloor K_1^d \rfloor$ or $\lceil K_1^d \rceil$ does not affect the resulting rates). The assumption 
$T~\ge~5^{(2\beta/d)+1}$ grants that $N\ge 3$.
To each cube $A_j$, $j=1,\dots,N$, we associate a function $f_j$, and we set $f_1(\cdot)=0$. Each $f_j$ is supported on $A_j$. To define $f_j$, we introduce, for each $j$, 
\[
\forall u \in [0,1]^d, \quad G_j(u) := \prod_{i=1}^d g_i\bigl(K_1(u_i - \mu_{j,i})\bigr),
\]
where $u_i$ denotes the $i$-th coordinate of $u$ and $\mu_{j,i}$ the $i$-th coordinate of $\mu_j$. For each $i \in \{1,\dots,d\}$,  $g_i(\cdot)$ is an infinitely many times differentiable function supported on $(-1/2,1/2)$, with $\sup_{u} |g_i^{(s)}(u)| \leq g_{\max} < \infty$ for all integers $0 \leq s \le \ell$.
We rescale $g_i$'s in such a way that $\max_{u\in (-1/2,1/2)^d } \prod_{i=1}^d g_i^{(\alpha_i)}(u_i)=1$.

Define functions $f_j$, $j=1,\dots, N$, as follows:
\begin{equation*}
    f_j(u) =
    \begin{cases}
        -a\, G_j(u), & \text{if } u \in A_j, \\
        0, & \text{otherwise},
    \end{cases}
\end{equation*}
where
\[
a := a_0 \left( \frac{T}{\log T} \right)^{-\fracbeta{\beta}} = a_0 K_1^{-\beta}.
\]
By choosing $a_0$ sufficiently small, we can ensure that all functions $f_j$ belong to $\Sigma(\beta,L)$.  Moreover, 
\begin{equation*}
\min_{x\in[0,1]^d} f^{(\alpha)}_j(x)=\min_{x\in A_j} f^{(\alpha)}_j(x) =-a K_1^{k}\max_{u\in (-1/2,1/2)^d } \prod_{i=1}^d g_i^{(\alpha_i)}(u_i)  = -a_0(\log(T)/T)^{\fracbeta{\beta-k}} =: -a_{*}. 
\end{equation*}
We now fix an arbitrary sequence of estimators $(\hat{x}_i)_{i=1}^T$ and define 
\[
R_T := \sup_{f\in\betahold} \Exp_{f}\left[ \sum_{i=1}^T\goodpar{f\exponentalpha(\hat{x}_i) - \min_{x\in[0,1]^d}f\exponentalpha(x)} \right]
\]
and the events
\[
\randomev  = \{\hat{x}_i \not \in A_j
\}, \quad i=1,\dots,T, \ \ j=1,\dots, N.
\]
Our aim is to bound $R_T$ from below uniformly over  $(\hat{x}_i)_{i=1}^T$. Conditioning on $\tau$ we have
\begin{align}\label{eq:start-lb}
    \quad R_T & \geq \sup_{j=1,...,N} \sum_{i=1}^T\Exp_{f_j}\left[ f_j\exponentalpha(\hat{x}_i) - \min_{x\in[0,1]^d} f_j\exponentalpha(x)\right]  \\ 
    &\ge \frac{1}{N}\sum_{j=1}^N\sum_{i=1}^T \Exp_{f_j}\left\{\Exp_{f_j}\left[  f_j\exponentalpha(\hat{x}_i) - \min_{x\in[0,1]^d} f_j\exponentalpha(x) \Big\vert \tau \right]\right\} \nonumber
\end{align}
In what follows, we obtain a lower bound independent of $\tau$ for the expression
\begin{align}\label{eq:lb expression}
\frac{1}{N}\sum_{j=1}^N\sum_{i=1}^T \Exp_{f_j}\left[  f_j\exponentalpha(\hat{x}_i) - \min_{x\in[0,1]^d} f_j\exponentalpha(x) \Big\vert \tau \right],
\end{align}
which implies the same bound for $R_T$. In order to bound \eqref{eq:lb expression} for fixed $\tau$ it is enough to bound the same expression, where the expectations are unconditional while  $X_i=\bar\Phi_i((X_t,Y_t)_{t=1}^{i-1})$ and $\hat{x}_i = \bar\Psi_i((Y_t,X_t)_{t=1}^{i})$ 
with  arbitrary measurable functions $\bar\Phi_i$ and $\bar\Psi_i$. It means that at this stage we can act as if there were no randomization $\tau$. Therefore, to shorten the notation, in the subsequent argument we write
$\Exp_{f_j}\left[\cdot\right]$ instead of $\Exp_{f_j}\left[\cdot\vert \tau\right]$.
With this convention and using the fact that $f_j\exponentalpha(\hat{x}_i)$ on the event $\randomev$, the expression in \eqref{eq:lb expression} can be bounded from below as follows:
\begin{align*}
    &\frac{1}{N}\sum_{j=1}^N\sum_{i=1}^T \Exp_{f_j}\left[  f_j\exponentalpha(\hat{x}_i) - \min_{x\in[0,1]^d} f_j\exponentalpha(x)\right]  \\ 
    &\quad \ge \frac{1}{N}\sum_{j=1}^N\sum_{i=1}^T \Exp_{f_j}\left[ \left( f_j\exponentalpha(\hat{x}_i) - \min_{x\in[0,1]^d} f_j\exponentalpha(x)\right)\mathbf{I}\{\randomev\}\right]\notag \\
    & \quad = \frac{1}{N}\sum_{i=1}^T\sum_{j=1}^N\Exp_{f_j}\left[ -\min_{x\in[0,1]^d} f_j\exponentalpha(x)\, \mathbf{I}\{\randomev\}\right]\notag = a_{*}\sum_{i=1}^T\left[ \frac{1}{N} \sum_{j=1}^N \proba_{f_j}(\randomev)\right].\label{ineq:lower_bound_cumureg_error_term}
\end{align*}
Here, $\proba_{f_j}$ denotes the distribution of the data when the true function is $f_j$. 

Next, we show that the value
$$
p_T:= \min_{i=1,\dots,T} \frac{1}{N}\sum_{j=1}^N \proba_{f_j}(\randomev)
$$
is bounded from below by an absolute constant. This will complete the proof.

We relate $p_T$ to the average probability of error of a multiple testing problem and then bound it from below by the minimal error over all tests. To this end, we first define a specific test $S^*$ measurable with respect to the estimator $\hat{x}_i$:
\[
S^*(\hat{x}_i) = \sum_{j=1}^N j \mathbf{I}\{\hat{x}_i \in A_j\}. 
\]
Note that, for any $i\in\{1,\dots,T\}, \; j\in\{1\dots,N\}$, we have $S^*(\hat{x}_i) = j$ if and only if $\randomev^c$ holds. Since the supports of functions $f_j$  are disjoint the events $\randomev^c,\; j=1,...,N,$ are also disjoint for any fixed $i\in\{1,\dots,T\}$. 
For any $i\in\{1,\dots,T\}$, we have
\begin{align*}
   \frac{1}{N}\sum_{j=1}^N \proba_{f_j}(\randomev) &= \frac{1}{N}\sum_{j=1}^N \proba_{f_j}(S^*(\hat{x}_i) \neq j) \\
   &\ge  \inf_S \frac{1}{N}\sum_{j=1}^N\proba_{f_j}(S\neq j),
\end{align*}
where the infimum is taken over all statistics $S$ with values in $\{1\dots,N\}$. The last expression is bounded from below using the version of Fano Lemma given in Lemma~\ref{cor:cor_stybakov_2009}.
For two probability measures $P$ and $Q$, consider the Kullback-Leibler divergence defined as $\KL(P,Q): = \int\log(dP/dQ)dP$ if $P\ll Q$ and $\KL(P,Q):=+\infty$ otherwise. 
\begin{Lemma}\label{cor:cor_stybakov_2009}[\cite{Tsybakov09}, Corollary 2.6]
Let $P_0,P_1,\dots,P_M$ be probability measures and $M\geq2$. Let \[
\frac{1}{M+1}\sum_{j=1}^M {\rm KL}(P_j,P_0) \leq \alpha \log (M),
\]
where $0<\alpha<1$. 
Then, 
\[
\inf_S \frac{1}{M+1}\sum_{j=0}^MP_j(S\neq j) \geq \frac{\log(M+1) - \log(2)}{\log(M)}-\alpha,
\]
where $\inf_S$ denotes the infimum over all statistics with values in $\{0,1,...,M\}$.
\end{Lemma}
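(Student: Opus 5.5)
This is Corollary 2.6 of \cite{Tsybakov09}. I would prove it in three steps: reduce the test error to a bound on the Bayes error under the uniform prior, apply Fano's inequality in its mutual-information form, and then bound the mutual information by the average divergence to $P_0$.

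\textbf{Reduction to the Bayes error.} Let $J$ be uniform on $\{0,1,\dots,M\}$, and given $J=j$ let the observation have law $P_j$. For any statistic $S$ with values in $\{0,\dots,M\}$, the quantity $\frac{1}{M+1}\sum_{j=0}^M P_j(S\neq j)$ is exactly $\mathbb{P}(S\neq J)$ under this joint law. It therefore suffices to bound $\mathbb{P}(S\neq J)$ from below uniformly in $S$. We may assume every $\KL(P_j,P_0)$ is finite, since otherwise the hypothesis cannot hold. Hence $P_j\ll P_0$, and the mixture $\bar P=\frac{1}{M+1}\sum_j P_j$ is well defined.

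\textbf{Fano's inequality.} Write $p_e=\mathbb{P}(S\neq J)$. The classical Fano inequality, obtained from the data-processing inequality applied to $J\to X\to S$, gives
\[
H(J)-I(J;X)\le H(J\mid S)\le \log 2 + p_e\log M .
\]
Here $H(J)=\log(M+1)$, and the bound on $H(J\mid S)$ uses the binary entropy bound $h(p_e)\le\log 2$ together with the fact that there are $M$ wrong labels. Rearranging yields
\[
p_e\ge \frac{\log(M+1)-\log 2 - I(J;X)}{\log M}.
\]

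\textbf{Bounding the mutual information.} The mutual information is $I(J;X)=\frac{1}{M+1}\sum_{j=0}^M \KL(P_j,\bar P)$. By the compensation identity,
\[
\sum_j\KL(P_j,Q)=\sum_j\KL(P_j,\bar P)+(M+1)\KL(\bar P,Q)\quad\text{for every } Q .
\]
Since $\KL(\bar P,Q)\ge 0$, the average divergence to the mixture is minimal among all reference measures, so
\[
I(J;X)\le \frac{1}{M+1}\sum_{j=0}^M \KL(P_j,P_0)=\frac{1}{M+1}\sum_{j=1}^M \KL(P_j,P_0)\le \alpha\log M.
\]
Substituting this into the Fano bound gives the claimed inequality $\frac{\log(M+1)-\log 2}{\log M}-\alpha$.

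\textbf{Main obstacle.} The only delicate step is stating Fano's inequality rigorously when the observation space is general rather than discrete. I would handle this through the data-processing inequality for KL divergence: the map $X\mapsto S(X)$ gives $I(J;S)\le I(J;X)$. Then Fano's inequality only needs to be proved for the discrete pair $(J,S)$, where it is elementary via the chain rule for entropy.
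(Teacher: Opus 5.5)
Your proof is correct and follows the standard derivation: the paper gives no argument of its own and simply cites Corollary 2.6 of \cite{Tsybakov09}, which is obtained there from Fano's lemma in the form $p_e\ge\bigl(\log(M+1)-\log 2-\frac{1}{M+1}\sum_{j}\KL(P_j,\bar P)\bigr)/\log M$, followed by $\sum_j\KL(P_j,\bar P)\le\sum_j\KL(P_j,P_0)$, which is your compensation identity with $Q=P_0$. The only difference is that, as I recall, Tsybakov proves the Fano step directly from the likelihood ratios $dP_j/d\bar P$ rather than through entropy and data processing; your reduction to the discrete pair $(J,S)$ via $I(J;S)\le I(J;X)$ is rigorous and handles general observation spaces just as well.
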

In our case, we set $P_0 = \proba_{f_1}, \; P_1 = \proba_{f_2},...,\;P_M=\proba_{f_N}$, and hence $M=N-1$. Recall that $N \geq 3$. For $M\geq 2,$ we have $\frac{\log(M+1) - \log(2)}{\log(M)}\geq\frac{1}{4}$. We fix $\alpha = \frac{1}{8}$. If
\begin{equation}\label{eq:ineq_kl_for_corollary}
\frac{1}{N}\sum_{j=2}^N \KL(\proba_{f_j}, \proba_{f_1}) \leq \frac{\log(N-1)}{8},
\end{equation}
then Lemma~\ref{cor:cor_stybakov_2009} yields
\begin{equation}\label{eq:ineq_lb for prob of error}
    \inf_S \frac{1}{N} \sum_{j=1}^N \proba_{f_j}(S \neq j) \geq \frac{1}{8}.
\end{equation}
We prove at the end of this section that \eqref{eq:ineq_kl_for_corollary} holds if $a_0$ is chosen small enough.
Combining \eqref{eq:ineq_lb for prob of error} with the preceding argument leads to the bound
\begin{equation}\label{eq:ineq_lb for cum regret}
    R_T \geq  \frac{a_{*}T}{8} = \frac{a_0}{8} T^{\fracbeta{\beta + d+k}}\log(T)^{\fracbeta{\beta-k}}.
\end{equation}
Since inequality \eqref{eq:ineq_lb for cum regret} holds for any sequence of estimators $(\hat{x}_i)_{i=1}^T$ obtained through active procedures the desired minimax lower bound on the cumulative regret follows. The lower bound on the simple regret is obtained quite analogously by following the steps after \eqref{eq:start-lb}, where the sums $\sum_{i=1}^T$  are replaced by only one element in the sum corresponding to index $i=T$. This leads to the lower bound ${a_{*}}/{8}$ on the minimax simple regret.

\paragraph{Proof of \eqref{eq:ineq_kl_for_corollary}} To conclude the proof, we now show that (\ref{eq:ineq_kl_for_corollary}) holds if $a_0$ is chosen small enough. Note that if $\proba_{f_j}\ll \proba_{f_1}$ then, by the chain rule for sequential strategies, see, e.g.,\cite[Theorem 2.16]{polyanskiy2025information} we have
$$
\frac{d\proba_{f_j}}{d\proba_{f_1}}\big((X_i,Y_i)_{i=1}^T\big)= \prod_{i=1}^T \frac{dF_{\xi}(Y_i - f_j(X_i))}{dF_{\xi}(Y_i - f_1(X_i))}.
$$
Using Assumption 4 and the fact that the supports of $f_1$ and $f_j$ are disjoint we obtain that the Kullback-Leibler divergence $\KL(\proba_{f_j},\proba_{f_1})$ satisfies:
\begin{align*}
    \KL(\proba_{f_j},\proba_{f_1})& =  \Exp_{f_j}\left[\sum_{i=1}^T\log\left(\frac{dF_{\xi}(Y_i-f_j(X_i))}{dF_{\xi}(Y_i-f_1(X_i))}\right)\right] \\
    &= \Exp_{f_j}\goodbrak{\sum_{i=1}^T \int \log\goodpar{\frac{dF_{\xi}(t)}{dF_{\xi}(t + f_j(X_i))}} dF_{\xi}(t) }\\
    &\le \Exp_{f_j}\left[\sum_{i=1}^T I_0 f_j^2(X_i)\right],
\end{align*}
where the last inequality holds whenever $\sup_u \vert f_j (u) \vert \le v_0$, which is granted if we choose $a_0\le v_0 g_{\max}^{-d}$ (recall that $\sup_u \vert f_j (u) \vert \le a_0 g_{\max}^{d}\left( \frac{T}{\log T} \right)^{-\fracbeta{\beta}}$ by construction). 
Thus, for such $a_0$ we have
\[
    \frac{1}{N}\sum_{j=2}^N {\rm KL}(\proba_{f_j},\proba_{f_1}) \leq \frac{I_0}{N}\sum_{i=1}^T\sum_{j=2}^N\Exp_{f_j}\left[f_j^2(X_i)\right]. 
\]
Moreover, since the supports of $f_j$'s are disjoint, 
\[
\sum_{j=2}^Nf_j^2(X_i) \leq \max_{x,j}f_j^2(x) = a^2.
\]
Therefore, 
\begin{equation*}
    \frac{1}{N}\sum_{j=2}^N\KL(P_{f_j},P_{f_1}) \leq \frac{I_0a^2}{N}T  
    = I_0 a_0^2 K_1^{-d-2\beta} T 
    = I_0a_0^2\log(T).
\end{equation*}
On the other hand, for $N\geq3$,
\begin{equation*}
    \frac{\log(N-1)}{8} \geq \frac{\log(N/2)}{8}= \frac{\log(K_1^d/2)}{8}\geq c_0\log(T),
\end{equation*}
where $c_0 >0$ is some constant. We conclude that  (\ref{eq:ineq_kl_for_corollary}) holds if $a_0\le \min(\sqrt{c_0/I_0},v_0 g_{\max}^{-d})$. \\
\hspace*{\fill} $\square$

\section{Proof of Theorem \ref{prop:rate_for_feasible_estim} (the polynomial time estimator)}\label{app:C}

Recall that, at each time instance $j$, the learner has access to $j$ observations pairs $(X_t,Y_t)_{t=1}^j$ given by the model, and an independent sample $Z_1,\dots,Z_{V_j}$ drawn from the uniform distribution on $[0,1]^d$.  Let $x^* \in [0,1]^d$ be a minimizer of $f\exponentalpha$
on $[0,1]^d$.  By \eqref{eq:def_of_feasible_estim}, we have
\begin{equation}\label{eq:2terms}
  f\exponentalpha(\hat{z}_{V_j})-f\exponentalpha(x^*) \leq 2\sup_{x\in[0,1]^d}\vert \estimfjalpha(x)-f\exponentalpha(x)\vert + {f\exponentalpha(z_{V_j}^*) - f\exponentalpha(x^*)}.  
\end{equation}
It is not hard to check that $ \lambda (  B ( x^* ,  \varepsilon)  \cap [0,1]^d ) \geq  c_d \lambda (  B ( x^* ,  \varepsilon)  )  ) $, for any $\varepsilon \in (0,1/2] $, where $c_d>0$ depends only on $d$. 
Let $\mathcal{Y}$  be the event that at least one $Z_i$ falls $\varepsilon$-close to $x^*$, i.e., $\mathcal{Y}:=\bigcup_{i=1}^{V_j} \{\Vert Z_i - x^*\Vert_2\leq\varepsilon\}$.
For any $\varepsilon \in (0,1/2] $ we have
$$ \proba\goodpar{\mathcal{Y}^c} = \goodpar{1 -   \lambda (  B ( x^* ,  \varepsilon)   \cap [0,1]^d) }^{V_j}  \leq   \goodpar{1 -    c_d \lambda (  B ( x^* ,  \varepsilon)  )  }^{V_j}  = \goodpar{1- c_dC_d  \varepsilon^d} ^{V_j}.$$ 
where $C_d := \pi^{d/2}/\Gamma(\frac{d}{2}+1)$.
Define $\varepsilon = (\log(V_j)/(V_jc_dC_d))^{\frac{1}{d}} \wedge 1/2$. Then
\begin{eqnarray}\label{eq:proba_zi_close_x*}
    \proba\goodpar{\mathcal{Y}^c} \leq  \frac {1} {V_j}  \vee \exp(-c_dC_d 2^{-d}V_j)\leq  \frac {M_d } {V_j}   ,
\end{eqnarray}
for some $M_d>0$ depending only on $d$.
We have
\begin{align*}
   \mathbb E [  {f\exponentalpha(z_{V_j}^*) - f\exponentalpha(x^*)}] & = 
  \Exp\goodbrak{(f\exponentalpha(z_{V_j}^{*})-f\exponentalpha(x^*))\mathbf{I}\{\mathcal{Y}\}} 
    +  \Exp\goodbrak{(f\exponentalpha(z_{V_j}^{*})-f\exponentalpha(x^*))\mathbf{I}\{\mathcal{Y}^c\}}\\
    & \leq  \Exp\goodbrak{f\exponentalpha(z_{V_j}^{**})-f\exponentalpha(x^*)\mathbf{I}\{\mathcal{Y}\}} + \frac{2LM_d}{V_j},
\end{align*}
 where $z_{V_j}^{**} \in\{Z_1,\dots,Z_{V_j}\}$ is any discretization point satisfying $\|z_{V_j}^{**}-x^*\|_2\leq \varepsilon$. Using the H\"older property of $f$ and considering different values of $\lonenorm{\alpha}=k$, we have : 
\begin{eqnarray}\label{ineq:bound_above_a_1}
    \Exp\goodbrak{(f\exponentalpha(z_{V_j}^{**})-f\exponentalpha(x^*))\mathbf{I}\{\mathcal{Y}\}} \leq \begin{cases}
        L \varepsilon^{\beta-\ell} \; \text{if} \; \lonenorm{\alpha} = \ell, \\
       \bar C \varepsilon \quad \text{if} \; \lonenorm{\alpha}< \ell, 
    \end{cases}
\end{eqnarray}
where $\ell=\lfloor \beta\rfloor$.
 The bound for the first case $\lonenorm{\alpha} = \ell$ in \eqref{ineq:bound_above_a_1} follows directly from the definition of $\beta$-Hölder functions. The bound for the second case uses the fact that for $\lonenorm{\alpha} \le \ell-1$ all the directional derivatives of $f\exponentalpha$ are bounded by $L$, so that the result holds with $\bar C = \sqrt d L$.  
 
 Consider now separately the two  cases of (\ref{ineq:bound_above_a_1}). For the case $\lonenorm{\alpha}< \ell$, the choice $V_j = \lfloor j^{\gamma}\rfloor +1$ with $\gamma / d>   \frac{(\beta-k)}{2\beta+d}$ implies that
$$  
\Exp[f\exponentalpha(z_{V_j}^{*})-f\exponentalpha(x^*)] \leq  \bar C  \left( \frac{\log(V_j)}{V_jc_dC_d} \right)^{\frac{1}{d}} +  \frac{2LM_d}{V_j} \leq C (\log(j))^{\frac{1}{d}} j^{- \frac{(\beta-k)}{2\beta+d}-\eta},$$
for some $C>0$ and $\eta>0$. For the case $\lonenorm{\alpha} = \ell$, using the fact that $V_j = \lfloor j^{\gamma}\rfloor +1$ with $\gamma  / d>   \frac{1}{2\beta+d}$ and recalling the notation $k=\lonenorm{\alpha}$ we obtain: 
$$  \Exp[f\exponentalpha(z_{V_j}^{*})-f\exponentalpha(x^*)] \leq   L \left( \frac{\log(V_j)}{ V_j c_d C_d} \right)^{\frac{\beta-\ell }{d}} +  \frac{2LM_d}{V_j} \leq C' (\log(j))^{\frac{\beta-k}{d}} j^{- \frac{(\beta-k)}{2\beta+d}-\eta},
$$
for some $C'>0$ and $\eta>0$. The two above upper bounds decay faster than $ (\log(j) / j)^{\fracbeta{\beta-k}} $. Together with \eqref{eq:2terms} and Proposition~\ref{prop:prop_upper_bound_lpe} this  implies the desired result. \\
\hspace*{\fill} $\square$

\bibliography{bib.bib}
\end{document}